\numberwithin{equation}{section}
\theoremstyle{plain}
\newtheorem{theorem}{Theorem}[section]
\newtheorem{lemma}[theorem]{Lemma}
\newtheorem{proposition}[theorem]{Proposition}
\newtheorem*{conjecture*}{Conjecture}
\newtheorem*{lemma*}{Lemma}
\theoremstyle{definition}
\newtheorem*{remark*}{Remark}
\newcommand{\mailto}[1]{\href{mailto:#1}{\nolinkurl{#1}}}
\newlength{\hdelta}
\newlength{\vdelta}
\newcommand{\R}{\mathbb{R}}
\newcommand{\Rp}{{\R_+}}
\newcommand{\E}{\operatorname{E}}
\newcommand{\pr}{\operatorname{P}}
\newcommand{\cov}{\operatorname{Cov}}
\newcommand{\var}{\operatorname{Var}}
\newcommand{\I}{\mathbf{1}}
\newcommand{\ind}{\I}
\newcommand{\matern}{Mat\'{e}rn}
\newcommand{\lambdath}{\lambda_{\rm th}} % Thinned grain density.
\newcommand{\Phith}{\Phi^{\rm th}} % Thinned germ--grain configuration.
\newcommand{\Xth}{X^{\rm th}}      % Thinned germ--grain cover.
\newcommand{\pth}{p_{\rm th}}      % Volume fraction of the thinned grain cover.
\newcommand{\Fth}{F_{\rm th}}      % Radius distribution of a typical grain in the thinned model.
\newcommand{\Fbarth}{\bar F_{\rm th}}
\newcommand{\kth}{k_{\rm th}}      % Covariance function of the thinned grain cover.
\newcommand{\xith}{\xi_{\rm th}}   % Two-point correlation function of the thinned grain cover.
\newcommand{\gth}{g_{\rm th}}      % Pair correlation function of the thinned germ configuration.
\begin{document}

\title{Hard-core thinnings of germ--grain models with power-law grain sizes}

%% \authorone[University of Jyväskylä]{Mikko Kuronen}
%% \addressone{Department of Mathematics and Statistics, University of Jyväskylä, PO Box 35, 40014 University of Jyväskylä, Finland.}
%% \emailone{mikko.p.o.kuronen@jyu.fi}

%% \authortwo[University of Jyväskylä]{Lasse Leskel\"a}
%% \addresstwo{Department of Mathematics and Statistics, University of Jyväskylä, PO Box 35, 40014 University of Jyväskylä, Finland.}
%% \emailtwo{lasse.leskela@iki.fi}
% PLAIN LATEX
\author{
Mikko Kuronen\thanks{Email: \protect\mailto{mikko.p.o.kuronen@jyu.fi}} \and Lasse Leskelä\thanks{
URL: \url{http://www.iki.fi/lsl/} \quad
Email: \protect\mailto{lasse.leskela@iki.fi}}
}
\date{\today}
\maketitle

\begin{abstract}
Random sets with long-range dependence can be generated using a Boolean model with power-law grain
sizes. We study thinnings of such Boolean models which have the hard-core property that no grains
overlap in the resulting germ--grain model. A fundamental question is whether long-range
dependence is preserved under such thinnings. To answer this question we study four natural
thinnings of a Poisson germ--grain model where the grains are spheres with a regularly varying
size distribution. We show that a thinning which favors large grains preserves the slow
correlation decay of the original model, whereas a thinning which favors small grains does not.
Our most interesting finding concerns the case where only disjoint grains are retained, which
corresponds to the well-known Matérn type~I thinning. In the resulting germ--grain model, typical
grains have exponentially small sizes, but rather surprisingly, the long-range dependence property
is still present. As a byproduct, we obtain new mechanisms for generating homogeneous and
isotropic random point configurations having a power-law correlation decay.
\end{abstract}

%% \keywords{regular variation; Boolean model; marked Poisson process; germ--grain model; hard-core
%% model; hard-sphere model}

%% \ams{60D05}{60G55}
% 60D05 Geometric probability and stochastic geometry
% 60G55 Point processes

\section{Introduction}

Consider a random closed set which can be expressed as a union of compact sets in the
$d$-dimensional Euclidean space $\R^d$. The compact building blocks of the random set are called
grains, the collection of grains \emph{germ--grain model}, and the union of grains \emph{grain
cover}. A germ--grain model is called \emph{hard-core} if the grains are disjoint with probability
one. Hard-core germ--grain models (a.k.a.\ random packing models) provide an important class of
mathematical tools for the natural sciences, allowing to model and analyze the statistical
features of disordered porous materials \cite{Ohser_Mucklich_2000,Schuth_Sing_Weitkamp_2002}.
Besides natural sciences, these models have found applications in engineering when analyzing the
performance of medium access protocols in wireless data networks (e.g.
\cite{Baccelli_Blaszczyszyn_2009b,Haenggi_2011,Nguyen_Baccelli}).

A key statistical feature of a random set is its covariance function, which describes how much
more or less likely it is to find matter at a given distance from a location containing matter,
compared to finding matter in an arbitrary location. While most germ--grain models studied in the
literature have a rapidly decaying covariance function, certain experimental studies in astronomy
\cite{Jones_Martinez_Saar_Trimble_2005} and materials science~\cite{Schuth_Sing_Weitkamp_2002}
display real-world data where the statistically estimated covariance function appears to decay
exceptionally slowly, following a power law $r^{-\beta}$ with some exponent $\beta > 0$ for large
distances $r$. When $\beta < d$, such models are \emph{long-range dependent} in the sense that
\begin{equation}
 \label{eq:LRD}
 \limsup_{r \to \infty} \frac{\var( |X \cap B_r| )}{ r^d } = \infty,
\end{equation}
where $|X \cap B_r|$ denotes the volume of the region covered by the random set $X$ within the
closed ball $B_r$ with radius $r$ centered at the origin \cite[Sec~12.7]{Daley_Vere-Jones_2008}.
Long-range dependence causes anomalous behavior to several statistical features of the model, as
is well understood in time series analysis \cite{Samorodnitsky_2006}. Note that for a homogeneous
random set in dimension $d=1$, property \eqref{eq:LRD} is equivalent to the usual notion of
long-range dependence,
\[
 \limsup_{n \to \infty} \frac{\var( \sum_{k=1}^n X_k)}{ n } = \infty,
\]
of the time series $X_k = | X \cap (k-1,k] |$.

Our goal in this article is to construct parsimonious germ--grain models having the hard-core and
long-range dependence property. In the presence of long-range dependence, the requirement of
parsimony, i.e.\ having a small number of model parameters, is especially important because
long-range dependence tends to reduce the robustness of the statistical estimators of model
parameters  \cite{Clauset_Shalizi_Newman_2009}. Long-range dependent germ--grain models are easy
to generate using a Boolean model---a germ--grain model with random power-law distributed sizes
and independently and uniformly scattered centers---but the resulting model is not hard-core by
construction. To make it hard-core, we shall follow Matérn's approach \cite{Matern_1960} of
thinning out a selected collection of overlapping grains from the proposed Boolean model so that
the resulting collection of grains is disjoint. Whether this approach is feasible for obtaining
hard-core models with long-range dependence depends on the following question:
\begin{quote}\it
  Is the power-law covariance decay of the proposed Boolean model
  preserved after making it disjoint by thinning?
\end{quote}

To answer the above question, we shall analyze in detail the following natural thinning
mechanisms:
\begin{itemize}
  \item \emph{Large retained}. Let the thinned model consist of those grains in the original Boolean model
  which are not overlapped by any larger grain in the original model.
  \item \emph{Random retained}. Assign independent random weights to the grains.
  Let the thinned model consist of those grains in the original model
  which are not overlapped by any heavier grain in the original model. (This thinning corresponds to Matérn type
  II.)
  \item \emph{Small retained}. Let the thinned model consist of those grains in the original model
  which are not overlapped by any smaller grain in the original model.
  \item \emph{Isolated retained}. Let the thinned model be the set of grains in the original model
  which do not overlap with any other grain in the original model.
  (This thinning corresponds to Matérn type I.)
\end{itemize}
We remark that---unlike the Matérn type III hard-core model \cite{Nguyen_Baccelli}---the above
thinnings are local in that the decision whether a proposed grain shall be retained or not is made
solely by looking at the grains which intersect it.

For simplicity, we shall restrict to spherical models where the grains are closed balls.
Figure~\ref{fig:ThinnedSets} illustrates the above four thinnings applied to a simulated sample of
a Boolean model in $\R^2$ where the grain centers have mean density $\lambda = 0.05$ and the grain
radii have a Pareto distribution $F(r) = 1-r^{-\alpha}$, $r \ge 1$, with tail exponent
$\alpha=2.5$.

\begin{figure}[h!]
 \includegraphics[width=\textwidth]{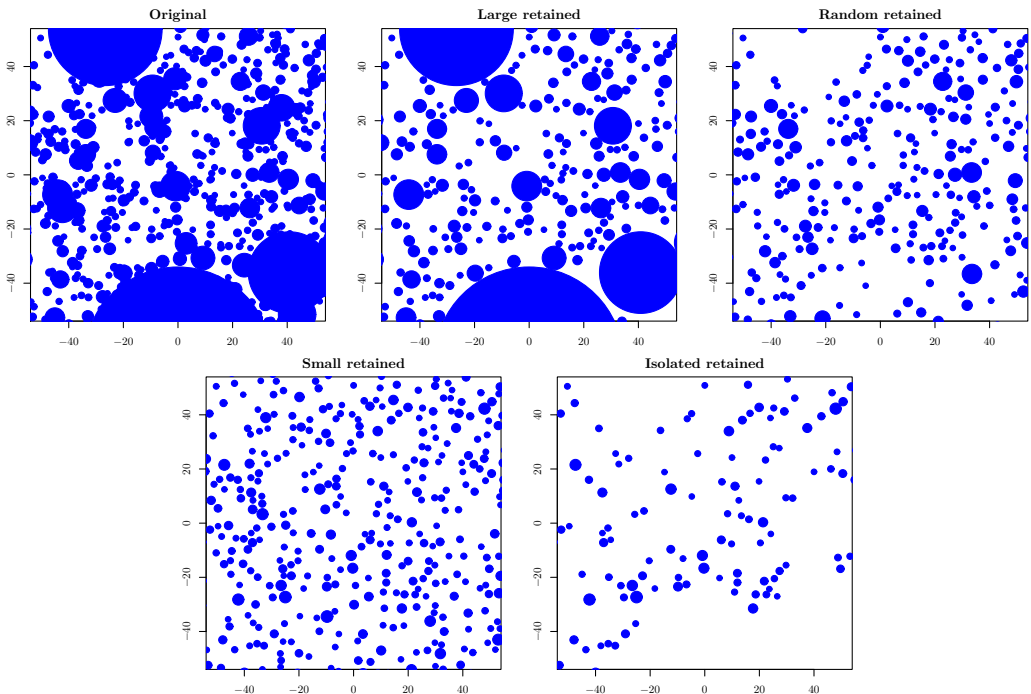}
 \caption{\label{fig:ThinnedSets} Original model and its four hard-core thinnings, where the original model is
 generated by disks having a Pareto distribution with tail exponent $\alpha = 2.5$.}
\end{figure}

The above thinnings will be analyzed collectively by viewing them as instances of a general
weight-based thinning mechanism, following Månsson and Rudemo \cite{Mansson_Rudemo_2002}. Standard
formulas of Palm calculus allow to write down closed-form analytical formulas for the radius
distribution of a typical grain, the covariance function of the grain cover, and the two-point
correlation function of the grain centers for general hard-core germ--grain configurations
generated by weight-based thinnings. Using the theory of regular variation, we analyze the
long-range behavior of these quantities under the assumption that the grain radii in the proposed
Boolean model follow a power-law distribution with tail exponent $\alpha > d$.

The main results of this article (Theorems \ref{the:isolated}, \ref{the:random}, \ref{the:large},
\ref{the:small}) are summarized in Table~\ref{tab:Results} below.
\begin{table}[h]
\small \centering
\begin{tabular}{l|>{}m{9em}|>{}m{9em}|>{}m{9em}}
Model & Radius distribution of a typical grain & Covariance function of grain cover & Correlation
function of grain centers
\tabularnewline
\hline \hline
Original   & power law $(\alpha)$ & power law $(\alpha - d)$ & zero
\tabularnewline
Large retained & power law $(\alpha)$ & power law $(\alpha - d)$ & power law $(\alpha - d)$
\tabularnewline
Random retained & power law $(\alpha + d)$ & power law $(\alpha - d)$ & power law $(\alpha - d)$
\tabularnewline
Small retained & exponential  & exponential & exponential
\tabularnewline
Isolated retained & exponential & power law $(\alpha - d)$ & power law $(\alpha - d)$
\tabularnewline
\end{tabular}
\caption{\label{tab:Results} Long-range decay of key statistical characteristics of the original Boolean model and the
hard-core germ--grain models obtained by thinning.}
\end{table}
From the table, we can draw the following conclusions:
\begin{itemize}
  \item The power-law covariance decay and long-range dependence (when $\alpha < 2d$)
  of the grain cover are preserved under all
  thinnings except \emph{small retained}.
  \item Whereas the random point configuration (a.k.a.\ point process) of grain centers
  in the proposed Boolean model is completely uncorrelated,
  the corresponding point configurations in all thinned models except \emph{small retained}
  have a power-law two-point correlation function.
  \item The heavy tail of the grain radius distribution is destroyed by \emph{small
  retained} and \emph{isolated retained} thinnings. The other two thinnings preserve the
  power-law structure of the tail distribution: under \emph{large retained} with the same
  exponent, under \emph{random retained} with a larger exponent corresponding to a lighter tail.
\end{itemize}
Table~\ref{tab:Results} also reveals a striking feature of the \emph{isolated retained} thinning
mechanism: The resulting grain cover and the resulting point configuration of grain centers both
exhibit long-range dependence although the grain size distribution is light-tailed. This seemingly
paradoxical phenomenon can be explained by inspecting the empty space: Any region of space not
covered by the thinned germ--grain model is likely to have been contained in a big grain of the
proposed model that was removed in the thinning, and therefore, a large neighborhood of this empty
region is likely to be empty, too.

This article may be seen as a continuation of the works of Månsson and Rudemo
\cite{Mansson_Rudemo_2002} and Andersson, Häggström and Månsson
\cite{Andersson_Haggstrom_Mansson_2006}, who analyzed first-order statistical properties of
hard-core germ--grain models obtained by weight-based thinnings. In
\cite[Cor~3.1]{Mansson_Rudemo_2002} it was also shown that \emph{large retained} thinning
preserves the tail behavior of the typical grain radius whenever the proposed grain radius
distribution is continuous. A slightly more general thinning framework was recently introduced by
Nguyen and Baccelli \cite{Nguyen_Baccelli}, who derived differential equations characterizing the
generating functional of the random point configuration formed by the thinned grain centers.
Earlier work on the covariance analysis of random sets includes Böhm and
Schmidt~\cite{Bohm_Schmidt_2003}, who derived a short-range approximation for the covariance
function of a general homogeneous random set. Snethlage, Martínez, Stoyan,
Saar~\cite{Snethlage_Martinez_Stoyan_Saar_2002} (see also references therein) provide a nice
summary of random point configuration models where the two-point correlation function has a
power-law behavior on short distances. Earlier works on long-range dependent random sets appear
mostly restricted to random point configuration in dimension $d=1$. Among these, Daley and Vesilo
\cite{Daley_Vesilo_1997} established the following elegant preservation property for many queueing
systems: the point configuration of the departure times is long-range dependent if and only if the
same is true for the arrival times. Daley~\cite{Daley_1999} showed that a renewal point process is
long-range dependent if the interpoint distances have an infinite second moment, and Kulik and
Szekli~\cite{Kulik_Szekli_2001} extended this observation to one-dimensional point configurations
with positively associated interpoint distances. Vamvakos and Anantharam
\cite{Vamvakos_Anantharam_1998} showed that the long-range dependence of a point process is
preserved by a leaky bucket flow control mechanism for data traffic. A study focused on the
long-range dependence of multidimensional random sets is the recent work of Demichel, Estrade,
Kratz, and Samorodnitsky \cite{Demichel_Estrade_Kratz_Samorodnitsky_2011}, who studied whether
random sets having power-law decaying chord length distributions, closely related to the
covariance function of the random set, can be generated as a level set of a Gaussian random
field---they found that in wide generality (merely assuming that the underlying Gaussian field is
mixing), this is not possible.

Let us summarize the notational conventions used in this paper. The symbol $\pr$ stands for the
probability measure on some abstract probability space which governs all randomness in the models,
and $\E, \var, \cov$ denote the expectation, variance, and covariance with respect to $\pr$,
respectively. The symbol $B_r(x)$ denotes the closed unit ball with center $x$ and radius $r$ in
the $d$-dimensional Euclidean space $\R^d$. We use $B_r$ as shorthand for $B_r(o)$, where $o$ is
the origin of $\R^d$. For a Borel set $B$ in $\R^d$, we denote by $|B|$ its Lebesgue measure, and
by $\ind_B(x)$ or $\ind(x \in B)$ its indicator function. The symbols $dx$, $dy$, etc.\ refer to
the Lebesgue measure in $\R^d$. The symbol $\R_+$ denotes the positive real numbers including
zero. The symbol $F(dr)$ refers to integration with respect to a probability measure $F$ on
$\R_+$, whereas $F(r) = F[0,r]$ and $\bar F(r) = 1-F(r)$ stand for the corresponding cumulative
distribution function and the complementary cumulative distribution function, respectively. The
minimum and maximum of real numbers $a$ and $b$ are denoted by $a \wedge b$ and $a \vee b$,
respectively. When convenient, we denote $\int_a^\infty = \int_{(a,\infty)}$, $\int_0^b =
\int_{[0,b]}$, and $\int_a^b = \int_{(a,b]}$ for $0<a<b<\infty$. For functions $f$ and $g$ defined
on the positive real line, we denote $f \sim g$ if $f(t)/g(t) \to 1$ as $t \to \infty$.

The rest of the paper is organized as follows. Section~\ref{sec:BooleanModels} summarizes
preliminaries on random Boolean models needed later in the text. Section~\ref{sec:Thinning}
introduces a weight-based thinning mechanism which produces hard-core germ--grain models from
Boolean models and list formulas for the second-order statistics of the models so obtained.
Section~\ref{sec:AsymptoticSecondOrder} contains a long-range analysis of the second-order
statistics of the previous section. The main results of Table~\ref{tab:Results} are proved
case-by-case in Section~\ref{sec:isolated} (isolated retained), Section~\ref{sec:random} (random
retained), Section~\ref{sec:large} (large retained), and Section~\ref{sec:small} (small retained.
Section~\ref{sec:Conclusions} concludes the paper.

\section{Boolean models with power-law grain radii}
\label{sec:BooleanModels}

A spherical Boolean model is a random collection of closed spheres, where the sphere centers are
independently and uniformly scattered in $\R^d$ and the sphere radii are independent and
identically distributed random variables in $\R_+$. Mathematically, a spherical Boolean model can
be defined as a Poisson random measure $\Phi$ on $\R^d \times \R_+$ with intensity measure
$\lambda dx F(dr)$, where $\lambda$ is a positive constant and $F$ is a probability measure on
$\R_+$ such that $\int r^d F(dr) < \infty$. We identify each pair $(x,r) \in \Phi$ with the closed
ball $B_r(x)$ with center $x$ and radius $r$ and---conforming to the terminology of more general
germ--grain models---such pairs wills be called \emph{grains}. The random closed set
\[
 X = \!\! \bigcup_{(x,r) \in \Phi} B_r(x)
\]
is called the \emph{grain cover} of $\Phi$, and we denote by
\[
 \Phi_g = \{x \in \R^d: (x,r) \in \Phi \ \text{for some $r$} \}
\]
the random point configuration in $\R^d$ formed by the grain centers of $\Phi$. Note that $\Phi_g$
is a homogeneous Poisson random measure on $\R^d$ with intensity measure $\lambda dx$. The
parameter $\lambda$ thus equals the mean density of grain centers, and the probability measure $F$
is the common distribution of grain radii. For general definitions and details about random sets
and random measures, see for example
\cite{Daley_Vere-Jones_2008,Molchanov_2005,Schneider_Weil_2008,Stoyan_Kendall_Mecke_1995}.

The covariances of the random set $X$ are denoted by $k(x,y) = \cov( \ind_X(x), \ind_X(y) )$,
where $\ind_X$ is the indicator function of $X$. Because the distribution of $X$ is
shift-invariant by construction, the covariances are given by $k(x,y) = k(x-y)$, where the
covariance function $k(z) = k(o,z)$ is given by the well-known formula (e.g.
\cite[Sec.~3.1]{Stoyan_Kendall_Mecke_1995})
\begin{equation}
 \label{eq:BooleanCov}
 k(z) = (1-p)^2 \left( e^{\lambda \int | B_r(o) \cap B_r(z) | \, F(dr) } - 1 \right),
\end{equation}
and where $p$ is the volume fraction of $X$ given by
\[
 p = 1 - e^{-\lambda \int |B_r| \, F(dr)}.
\]
Formula~\eqref{eq:BooleanCov} indeed shows that $k(z)$ depends on $z$ only through $|z|$, which is
evident because $X$ is isotropic by construction. Using this formula we may also deduce that
\[
 k(z) \, \sim \, \lambda (1-p)^2 \int | B_r(o) \cap B_r(z) | \, F(dr)
 \quad \text{as $|z| \to \infty$},
\]
where we denote $f(z) \sim g(z)$ if $f(z)/g(z) \to 1$ as $|z| \to \infty$. When the grain radius
distribution $F$ follows a power law with tail exponent $\alpha > d$, so that $F(r) = 1 - \ell(r)
r^{-\alpha}$ for some slowly varying function $\ell$ (see Appendix~\ref{sec:RegularVariation} for
details), it follows by using Lemma~\ref{lemma_average_intersection} in
Appendix~\ref{sec:Intersections} that
\[
 k(z) \, \sim \, \lambda  (1-p)^2 c_{\alpha,d} \ell(|z|) |z|^{-(\alpha-d)}
 \quad \text{as $|z| \to \infty$}.
\]
Thus, when the radius distribution follows a power law with tail exponent $\alpha > d$, then the
covariance function $k(z)$ follows a power law with tail exponent $\alpha - d$. Especially, the
Boolean grain cover $X$ is long-range dependent in the sense of~\eqref{eq:LRD} for $\alpha \in
(d,2d)$.

\section{Weight-based thinning}
\label{sec:Thinning}

In this section we shall study a weight-based thinning mechanism which maps a Boolean model into a
hard-core germ--grain model consisting of nonoverlapping grains
\cite{Mansson_Rudemo_2002,Nguyen_Baccelli}. This thinning mechanism is defined by assigning random
weights to the grains of the Boolean model, and retaining those grains which are not overlapped by
any other grain in the Boolean model with a higher or equal weight.

\subsection{Thinning mechanism}

A weighted spherical Boolean model is defined as a Poisson random measure $\Phi$ on $\R^d \times
\Rp \times \Rp$ with intensity measure
\[
 \Lambda(dx,dr,dw) = \lambda dx F(dr) G_r(dw),
\]
where $\lambda > 0$, $F$ is a probability measure on $\R_+$ such that $\int r^d F(dr) < \infty$,
and $G$ is a probability kernel on $\R_+$ (a family of probability measures $G_r$ on $\R_+$
indexed by $r$ such that $r \mapsto G_r(A)$ is measurable for measurable $A \subset \R_+$). A
triplet $(x,r,w) \in \Phi$ is identified as a grain with center $x$, radius $r$, and weight $w$.
As in Section~\ref{sec:BooleanModels}, the constant $\lambda$ is the mean density of grain centers
and the probability measure $F$ is the distribution of grain radii. The probability measure $G_r$
is the weight distribution of a grain with radius $r$.

We say that two distinct grains are \emph{neighbors} if they intersect each other, and we denote
the set of neighbors of a reference grain $(x,r,w)$ by
\begin{equation}
 \label{eq:Neighbors}
 N_{x,r,w}
 = \left\{ (x',r',w') \in \R^d \times \Rp \times \Rp \setminus \{(x,r,w)\} :
   B_{r'}(x') \cap B_r(x) \neq \emptyset \right\}.
\end{equation}
The thinning of a weighted spherical Boolean model $\Phi$ is now defined by $\Phith = T(\Phi)$,
where
\begin{equation}
 \label{eq:Thinning}
 T(\Phi)
 = \left\{(x, r, w) \in \Phi : w > w' \ \text{for all} \ (x',r',w') \in \Phi \cap N_{x,r,w} \right\}.
\end{equation}
To rephrase the definition, we say that a grain $(x',r',w')$ \emph{obstructs} grain $(x,r,w)$ if
$(x',r',w')$ is a neighbor of $(x,r,w)$ and $w' \ge w$. Then by definition, the thinned
germ--grain configuration $\Phith$ consists of grains in $\Phi$ which are not obstructed by any
other grain in $\Phi$. Note that two overlapping grains with equal weights obstruct each other,
and will be both removed.

The following choices of $G_r$ yield the four thinnings which shall be analyzed in detail in
Sections~\ref{sec:isolated}--\ref{sec:small}.
\begin{itemize}
  \item \emph{Large retained}. The weight of each grain is set equal to its radius, so that $G_r(dw) = \delta_r(dw)$.
  \item \emph{Random retained}. The grains are assigned independent uniformly distributed random weights, so that
  $G_r(dw) = 1_{(0,1)}(w) dw$.
  \item \emph{Small retained}. The weight of each grain is set equal to the inverse of its radius, so that $G_r(dw) = \delta_{1/r}(dw)$.
  \item \emph{Isolated retained}. All grains are assigned weight one, so that $G_r(dw) = \delta_1(dw)$.
\end{itemize}

\subsection{Retention probability}
The \emph{retention probability} of a reference grain $(x,r,w)$ is defined as the probability that
$(x,r,w)$ belongs to the germ--grain configuration obtained by thinning the union $\Phi \cup
\{(x,r,w)\}$. Because this probability does not depend on $x$ (see
Proposition~\ref{the:RetentionProbability}), we shall denote it by
\[
 h(r,w) = \pr( (x,r,w) \in T( \Phi \cup \{(x,r,w)\} ) ).
\]
The quantity $h(r,w)$ may be regarded as the probability that a typical grain with radius $r$ and
weight $w$ in the proposed Boolean model is retained (see e.g.~\cite{Schneider_Weil_2008,Stoyan_Kendall_Mecke_1995}). Analogously, the
weight-averaged retention probability
\begin{equation}
 \label{eq:H1Integrated}
 h(r) = \int_\Rp h(r, w) \, G_r(dw)
\end{equation}
may be regarded as the probability that a typical grain of radius $r$ in the proposed Boolean
model is retained. The following result \cite[Thm.~2.2]{Mansson_Rudemo_2002} gives a formula for
the retention probability. For the reader's convenience we will include the proof here.

\begin{proposition}
\label{the:RetentionProbability}
The retention probability of an arbitrary reference grain $(x,r,w) \in \R^d \times \Rp \times \Rp$
does not depend on $x$, and is given by
\begin{equation}
 \label{eq:H1}
 h(r, w) = \exp \left\{ - \lambda \int_\Rp | B_{r + s}(o) | \, G_s[w, \infty) \, F(ds) \right\}.
\end{equation}
\end{proposition}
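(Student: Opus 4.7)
The plan is to identify ``$(x,r,w)$ is retained'' with a void event for the Poisson random measure $\Phi$ and then apply the Poisson void probability formula. First I would unfold the definitions: by \eqref{eq:Thinning}, the reference grain $(x,r,w)$ belongs to $T(\Phi \cup \{(x,r,w)\})$ if and only if no triplet $(x',s,w') \in \Phi \cap N_{x,r,w}$ has $w' \ge w$. (Note that $(x,r,w)$ itself is excluded from $N_{x,r,w}$ by the $\setminus\{(x,r,w)\}$ in \eqref{eq:Neighbors}, so only the points of $\Phi$ matter.) In view of the definition of $N_{x,r,w}$, the obstructing configurations form the Borel set
\[
 A_{x,r,w} = \bigl\{ (x',s,w') \in \R^d \times \Rp \times \Rp :  |x-x'| \le r+s, \  w' \ge w \bigr\},
\]
because $B_s(x') \cap B_r(x) \neq \emptyset$ is equivalent to $|x-x'| \le r+s$. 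Thus the retention event equals $\{\Phi(A_{x,r,w}) = 0\}$.

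Next I would apply the void probability of a Poisson random measure: since $\Phi$ has intensity $\Lambda(dx',ds,dw') = \lambda\, dx'\, F(ds)\, G_s(dw')$,
\[
 h(r,w) = \pr\bigl( \Phi(A_{x,r,w}) = 0 \bigr) = \exp\bigl\{ - \Lambda(A_{x,r,w}) \bigr\}.
\]
Computing $\Lambda(A_{x,r,w})$ by Fubini, first integrating over $w'$ (which gives $G_s[w,\infty)$), then over $x'$ (which gives $|B_{r+s}(x)|$), and finally over $s$, yields
\[
 \Lambda(A_{x,r,w}) = \lambda \int_{\Rp} |B_{r+s}(x)|\, G_s[w,\infty)\, F(ds).
\]
Translation invariance of Lebesgue measure gives $|B_{r+s}(x)| = |B_{r+s}(o)|$, which proves both the independence of $x$ and the formula \eqref{eq:H1}.

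There is no real obstacle here; the content of the statement is essentially recognizing that the retention event is the void event of $\Phi$ on a product-type set in the mark space, together with keeping track of the strict-versus-weak inequality convention used in \eqref{eq:Thinning}. The only point meriting a line of care is the latter: retention requires $w > w'$ for every neighbor, i.e.\ the obstructing set is $\{w' \ge w\}$, which is why $G_s[w,\infty)$ (the closed half-line) appears in the intensity computation rather than $G_s(w,\infty)$.
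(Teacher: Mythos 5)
Your proof is correct and follows the same route as the paper: both identify the retention event with the void event $\{\Phi(A_{x,r,w}) = 0\}$ for the obstructing set $A_{x,r,w}$, apply the Poisson void probability formula $e^{-\Lambda(A_{x,r,w})}$, and compute the intensity by Fubini using translation invariance. Your remark about the strict/weak inequality convention (retention requires $w > w'$, hence $G_s[w,\infty)$) matches the paper's convention exactly.
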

\begin{proof}
Fix a reference grain $(x,r,w)$ and denote $\Phi' = \Phi \cup \{(x,r,w)\}$. By definition, the
reference grain belongs to the thinned configuration $T(\Phi')$ if and only if $w > w'$ for all
$(x',r',w') \in N_{x,r,w} \cap \Phi'$, where $N_{x,r,w}$ is the neighbor set of $(x,r,w)$ defined
by~\eqref{eq:Neighbors}. Observe that $N_{x,r,w} \cap \Phi' = N_{x,r,w} \cap \Phi$, because no
grain is its own neighbor by definition. As a consequence, the retention probability can be
expressed using the the intensity measure of the Poisson point configuration $\Phi$ according to
\[
 \pr ( (x,r,w) \in T(\Phi') )
 = \pr( \Phi(A_{x,r,w}) = 0 )
 = e^{ - \Lambda(A_{x,r,w}) },
\]
where
\[
 A_{x,r,w} = \{ (x',r',w') \in N_{x,r,w} : w' \ge w \}
\]
is the set of grains obstructing $(x,r,w)$. The claim now follows because
\begin{align*}
 \Lambda(A_{x,r,w})
 &= \int_{\R^d} \int_{\Rp} \int_{\Rp} \ind(|x-x'| \le r + r') \, \ind(w' \ge w) \, G_{r'}(dw') F(dr') \lambda dx' \\
 &= \lambda \int_{\Rp} |B_{r+r'}(o)| \, G_{r'}[w,\infty) F(dr').
\end{align*}
\end{proof}

\subsection{First-order statistics of the thinned model}

Let us summarize some key formulas about the first-order statistics of the thinned germ--grain
model $\Phith$ which were obtained in \cite{Andersson_Haggstrom_Mansson_2006,Mansson_Rudemo_2002}.
The mean density of grain centers in the thinned model is given by
\begin{equation}
 \label{eq:GermDensity}
 \lambdath = \lambda \int_{\Rp} h(r) \, F(dr),
\end{equation}
where $h(r)$ is the weight-averaged retention probability defined in~\eqref{eq:H1Integrated}, and
the radius distribution of a typical grain in the thinned model equals
\begin{equation}
 \label{eq:radiusth}
 \Fth (r) = 1 - \frac{\lambda}{\lambdath} \int_r^\infty h(s) \, F(ds).
\end{equation}
Moreover, the volume fraction of the thinned grain cover
\[
 \Xth = \!\! \bigcup_{(x,r,w) \in \Phith} B_r(x)
\]
is given by
\begin{equation}
 \label{eq:VolumeFraction}
 \pth = \lambda \int_\Rp |B_r| h(r) F(dr).
\end{equation}
Note that the quantity $\int h(r) F(dr)$ in~\eqref{eq:GermDensity} may be regarded as the
probability that a randomly chosen grain in the proposed Boolean model is retained by the thinning
mechanism.

\subsection{Pair retention probability}

The \emph{pair retention probability} of a given pair of reference grains $(x_1,r_1,w_1)$ and
$(x_2,r_2,w_2)$ is defined as the probability that both reference grains belong to the germ--grain
configuration obtained by thinning the union $\Phi' = \Phi \cup \{(x_1,r_1,w_1), (x_2,r_2,w_2)\}$.
Because this probability depends on $x_1$ and $x_2$ only through their distance (see
Proposition~\ref{the:H2}), we shall denote it by
\begin{equation}
 \label{eq:H2}
 h_2(u,r_1,w_1,r_2,w_2) = \pr( \{(x_1,r_1,w_1),(x_2,r_2,w_2)\} \in T( \Phi' ) ),
\end{equation}
where $u=|x_1-x_2|$. The weight-averaged pair retention probability is defined by
\begin{equation}
 \label{eq:H2Int}
 h_2(u,r_1,r_2) = \int_\Rp \int_\Rp h_2(u,r_1,w_1,r_2,w_2) \, G_{r_1}(dw_1) G_{r_2}(dw_2).
\end{equation}

\begin{proposition}
\label{the:H2}
The pair retention probability of two reference grains $(x_1, r_1, w_1)$ and $(x_2, r_2, w_2)$
depends on $x_1$ and $x_2$ only through the distance $u=|x_1-x_2|$. For $u \le r_1 + r_2$ this
probability equals zero, and for $u > r_1 + r_2$,
\[
 h_2(u, r_1, w_1, r_2,w_2)
 = h(r_1, w_1) h(r_2, w_2) e^{\tau(u, r_1, w_1, r_2, w_2)}
\]
where $h(r_1, w_1)$ and $h(r_2, w_2)$ are the retention probabilities defined by~\eqref{eq:H1},
and
\[
 \tau(u, r_1, w_1, r_2, w_2) =
 \lambda \int_\Rp | B_{r + r_1}(x_1) \cap B_{r + r_2}(x_2) | \, G_r[w_1 \vee w_2, \infty ) \, F(dr)
\]
is the mean number of grains in $\Phi$ which simultaneously obstruct both reference grains.
\end{proposition}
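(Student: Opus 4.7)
My plan is to mimic the proof of Proposition~\ref{the:RetentionProbability} but now track two obstructing sets simultaneously and combine them via inclusion--exclusion. Write $\Phi' = \Phi \cup \{(x_1,r_1,w_1),(x_2,r_2,w_2)\}$. For each $i \in \{1,2\}$, let $A_i = A_{x_i,r_i,w_i}$ be the set of triplets obstructing the reference grain $i$, as defined in the proof of Proposition~\ref{the:RetentionProbability}. Since the neighbor relation excludes a grain being its own neighbor, adding the two reference grains to $\Phi$ does not change $N_{x_i,r_i,w_i} \cap \Phi$, so the only way the reference grains can interact with each other in $T(\Phi')$ is via mutual obstruction.

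For $u \le r_1+r_2$ the balls $B_{r_1}(x_1)$ and $B_{r_2}(x_2)$ intersect, so the two reference grains are neighbors. Whatever the values of $w_1,w_2$, the strict inequality in~\eqref{eq:Thinning} fails for at least one of them (the one with the smaller weight, or both when $w_1=w_2$). Thus at least one reference grain is removed by $T$, and $h_2(u,r_1,w_1,r_2,w_2)=0$.

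For $u>r_1+r_2$ the reference grains do not obstruct each other, so both belong to $T(\Phi')$ if and only if neither is obstructed by a grain of $\Phi$, i.e.\ $\Phi(A_1\cup A_2)=0$. By the Poisson void probability,
\[
 h_2(u,r_1,w_1,r_2,w_2)
 \;=\; e^{-\Lambda(A_1\cup A_2)}
 \;=\; e^{-\Lambda(A_1)}\,e^{-\Lambda(A_2)}\,e^{\Lambda(A_1\cap A_2)}.
\]
The first two factors equal $h(r_1,w_1)$ and $h(r_2,w_2)$ by Proposition~\ref{the:RetentionProbability}. It remains to compute $\Lambda(A_1\cap A_2)$ and identify it with $\tau$.

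A grain $(x',r',w')$ lies in $A_1\cap A_2$ precisely when $|x'-x_i|\le r'+r_i$ for both $i=1,2$ and $w'\ge w_1$ and $w'\ge w_2$, the latter two conditions combining into $w'\ge w_1\vee w_2$. Integrating against $\Lambda(dx',dr',dw')=\lambda\,dx'\,F(dr')\,G_{r'}(dw')$ and using Fubini gives
\[
 \Lambda(A_1\cap A_2)
 = \lambda\!\int_{\Rp}\! \bigl|B_{r'+r_1}(x_1)\cap B_{r'+r_2}(x_2)\bigr|\,G_{r'}[w_1\vee w_2,\infty)\,F(dr'),
\]
which is precisely $\tau(u,r_1,w_1,r_2,w_2)$. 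Shift-invariance of Lebesgue measure shows that this intersection volume depends on $x_1,x_2$ only through $u=|x_1-x_2|$, yielding the claimed dependence. The only subtlety, and the step most likely to trip one up, is the correct handling of the strict-versus-nonstrict weight inequalities (the strict $w>w'$ in~\eqref{eq:Thinning} versus the nonstrict $w'\ge w_1\vee w_2$ describing $A_1\cap A_2$): the former makes the overlap case trivially vanish, while the latter is what combines the two single-grain obstruction sets into one Poisson void event.
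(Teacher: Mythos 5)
Your proof is correct and follows essentially the same route as the paper: reduce to the Poisson void event $\Phi(A_1\cup A_2)=0$, expand $\Lambda(A_1\cup A_2)$ by inclusion--exclusion, identify $e^{-\Lambda(A_i)}$ with $h(r_i,w_i)$, and compute $\Lambda(A_1\cap A_2)=\tau$. You also spell out the overlap case $u\le r_1+r_2$, which the paper leaves implicit; the only nit is that the dependence on $|x_1-x_2|$ alone uses rotation invariance of Lebesgue measure in addition to the translation invariance you cite.
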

\begin{proof}
Fix two reference grains $(x_1, r_1, w_1)$ and $(x_2, r_2, w_2)$ and assume that they do not
overlap, so that $|x_1-x_2| > r_1 + r_2$. Denote $\Phi' = \Phi \cup \{(x_1, r_1, w_1), (x_2, r_2,
w_2)\}$. Recall that grain $(x_1,r_1,w_1)$ belongs to $T(\Phi')$ if and only if $w_1
> w$ for all $(x,r,w) \in N(x_1,r_1,w_1) \cap \Phi'$. Because no grain is its own neighbor by
definition, and because the two reference grains are not neighbors, we see that $N(x_1,r_1,w_1)
\cap \Phi' = N(x_1,r_1,w_1) \cap \Phi$. By symmetry, a similar conclusion also holds for the other
reference grain.

We conclude that for $i=1,2$, grain $(x_i,r_i,w_i)$ is retained if and only if $\Phi(A_i) =
\emptyset$, where
\[
 A_i = \{ (x,r,w) \in N(x_i,r_i,w_i): w \ge w_i \}
\]
is the set of grains obstructing $(x_i,r_i,w_i)$. Now the pair retention probability can be
written as
\begin{equation}
 \label{eq:h2Prob}
 h_2 = \pr ( \Phi(A_1 \cup A_2) = 0 ).
\end{equation}
The number of grains in $\Phi \cap (A_1 \cup A_2)$ is Poisson distributed with mean
\[
 \Lambda(A_1 \cup A_2) = \Lambda(A_1) + \Lambda(A_2) - \Lambda(A_1 \cap A_2).
\]
Because $e^{-\Lambda(A_i)}$ equals the retention probability $h(r_i,w_i)$ of grain $(x_i,r_i,w_i)$
(see Proposition~\ref{the:RetentionProbability}), we see that
\[
 h_2 = h(r_1,w_1) h(r_2,w_2) e^{\Lambda(A_1 \cap A_2)}.
\]
The claim now follows after noting that
\[
 \Lambda(A_1 \cap A_2)
 = \lambda \int_\Rp | B_{r_1+r}(x_1) \cap B_{r_2+r}(x_2) | G_r[w_1 \vee w_2, \infty ) \, F(dr).
\]
\end{proof}

A key quantity for analyzing the covariance function of the thinned grain cover in
Section~\ref{sec:AsymptoticSecondOrder} is the following function, which we shall call the
\emph{retention covariance function}. It is defined by
\begin{equation}
 \label{eq:RetentionCovariance}
 q(u,r_1,r_2) = h_2(u,r_1,r_2) - h(r_1) h(r_2),
\end{equation}
where $h(r)$ denotes the weight-averaged retention probability defined in~\eqref{eq:H1Integrated},
and $h_2(u, r_1, r_2)$ is the weight-averaged pair retention probability defined
in~\eqref{eq:H2Int}.

\begin{lemma}
\label{the:H2Bound}
The retention covariance function satisfies
\[
 \left| q(u,r_1,r_2 ) \right| \le h(r_1) \wedge h(r_2)
\]
for all $u, r_1, r_2 \ge 0$.
\end{lemma}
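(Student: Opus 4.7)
The plan is to bound $q(u,r_1,r_2)$ from above and below separately by $h(r_1)\wedge h(r_2)$, splitting into the trivial overlapping case and the main non-overlapping case.

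First, I would dispose of the easy case $u \le r_1+r_2$: Proposition~\ref{the:H2} gives $h_2(u,r_1,w_1,r_2,w_2)=0$, hence after averaging $h_2(u,r_1,r_2)=0$ and $q(u,r_1,r_2)=-h(r_1)h(r_2)$. Since the weight-averaged retention probability $h(r)$ is a probability and so lies in $[0,1]$, we have $h(r_1)h(r_2) \le h(r_1)\wedge h(r_2)$, which settles this case.

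For the main case $u > r_1+r_2$ I would work at the level of fixed weights $(w_1,w_2)$ and then integrate. Inspecting the proof of Proposition~\ref{the:H2}, the pair retention probability has the form
\[
 h_2(u,r_1,w_1,r_2,w_2) = e^{-\Lambda(A_1\cup A_2)},
\]
while $h(r_i,w_i) = e^{-\Lambda(A_i)}$. Since $A_1\cup A_2 \supseteq A_i$ and $A_1\cup A_2 = A_1+A_2 - (A_1\cap A_2)$ with nonnegative $\Lambda(A_1\cap A_2)$, the pointwise sandwich
\[
 h(r_1,w_1)\,h(r_2,w_2) \;\le\; h_2(u,r_1,w_1,r_2,w_2) \;\le\; h(r_1,w_1)\wedge h(r_2,w_2)
\]
holds. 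Integrating against $G_{r_1}(dw_1)G_{r_2}(dw_2)$, the left inequality yields $h_2(u,r_1,r_2) \ge h(r_1)h(r_2)$, so $q(u,r_1,r_2)\ge 0$ and $|q|=q$.

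For the upper bound I would use the right inequality together with the trivial $a\wedge b \le a$ (and $\le b$): after integration,
\[
 h_2(u,r_1,r_2) \;\le\; \int h(r_1,w_1)\,G_{r_1}(dw_1) = h(r_1),
\]
and symmetrically $h_2(u,r_1,r_2)\le h(r_2)$, giving $h_2(u,r_1,r_2)\le h(r_1)\wedge h(r_2)$. Hence
\[
 q(u,r_1,r_2) \;=\; h_2(u,r_1,r_2) - h(r_1)h(r_2) \;\le\; h(r_1)\wedge h(r_2),
\]
since $h(r_1)h(r_2)\ge 0$. Combined with $q\ge 0$, this establishes $|q|\le h(r_1)\wedge h(r_2)$. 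There is no real obstacle here; the only subtlety is realizing that the pointwise sandwich between products and minima, readily obtained from the Poisson formula $h_2=e^{-\Lambda(A_1\cup A_2)}$, is preserved by the product-of-measures integration used to define the weight-averaged quantities.
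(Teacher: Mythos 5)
Your proposal is correct and takes essentially the same approach as the paper's proof: both reduce to the fixed-weight level, exploit the explicit Poisson form $h_2 = e^{-\Lambda(A_1\cup A_2)}$ and $h(r_i,w_i) = e^{-\Lambda(A_i)}$, split into the overlapping and non-overlapping cases, and then integrate over the weight distributions. Your version is marginally sharper in that it records $q \ge 0$ in the non-overlapping case via the subadditivity $\Lambda(A_1\cup A_2) \le \Lambda(A_1) + \Lambda(A_2)$, whereas the paper only invokes $h_2 \ge 0$ there and works with a two-sided bound on $|q|$, but the underlying argument is the same.
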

\begin{proof}
Fix a pair of reference grains $(x_1,r_1,w_1)$ and $(x_2,r_2,w_2)$ having their centers at a
distance $u=|x_1-x_2|$ apart. Define a weight-dependent version of $q$ by
\[
 q(u,r_1,w_1,r_2,w_2) = h_2(u,r_1,w_1,r_2,w_2) - h(r_1,w_1) h(r_2,w_2).
\]
We will first show that
\begin{equation}
 \label{eq:H2Bound1}
 |q(u,r_1,w_1,r_2,w_2)| \le h(r_1,w_1),
\end{equation}
by separately considering the following two cases:
\begin{enumerate}[(i)]
  \item If $u \le r_1 + r_2$, then $h_2(u,r_1,w_1,r_2,w_2 )$ is zero because the reference grains
overlap, and~\eqref{eq:H2Bound1} follows immediately.
  \item If $u > r_1 + r_2$, then by borrowing the notation from the proof of Proposition~\ref{the:H2}, we
have by~\eqref{eq:h2Prob} that
\[
 0 \le h_2(u,r_1,w_1,r_2,w_2)
 = \pr( \Phi(A_1 \cup A_2) = 0 )
 \le \pr( \Phi(A_1) = 0 )
 = h(r_1,w_1).
\]
As a consequence,
\[
 -h(r_1,w_1) h(r_2,w_2) \le q(u,r_1,w_1,r_2,w_2) \le h(r_1,w_1) (1-h(r_2,w_2)),
\]
from which \eqref{eq:H2Bound1} again follows.
\end{enumerate}
After integrating both sides of \eqref{eq:H2Bound1} over the weights, we see that $| q(u,r_1,r_2 )
| \le h(r_1)$. By symmetry, the same inequality holds with $r_1$ replaced by $r_2$, which proves
the claim.
\end{proof}

\subsection{Covariance function of the thinned grain cover}

Let us now consider the covariance function
\[
 \kth(z) = \pr( o \in \Xth \!\!, \ z \in \Xth ) - \pr(o \in \Xth) \pr(z \in \Xth)
\]
of the thinned grain cover $\Xth$.

\begin{proposition}
\label{the:CovThinnedGrainCover}
The covariance function of the thinned grain cover is given by
\begin{equation}
  \label{eq:cov}
  \begin{aligned}
   &\kth(z) \ = \ \lambda \int_\Rp |B_r(o) \cap B_r(z)| h(r)  F(dr) \\
   & + \lambda^2 \int_\Rp \int_\Rp \int_{\R^d} |B_{r_1}(o) \cap B_{r_2}(x)| \, q(|x - z|, r_1, r_2)
   \, dx F(dr_1) F(dr_2),
  \end{aligned}
\end{equation}
where $h$ is the weight-averaged retention probability defined by~\eqref{eq:H1Integrated} and $q$
is the retention covariance function defined by~\eqref{eq:RetentionCovariance}.
\end{proposition}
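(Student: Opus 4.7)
The plan is to compute $\pr(o \in \Xth, z \in \Xth)$ directly and then subtract $\pth^2$. The decisive simplification is the hard-core property of $\Phith$: since the retained grains are pairwise disjoint almost surely, for every $y \in \R^d$
\[
\ind_{\Xth}(y) = \sum_{(x,r,w) \in \Phith} \ind(y \in B_r(x))
\]
with no inclusion--exclusion correction, so $\ind_{\Xth}(o)\ind_{\Xth}(z) = S_1 + S_2$, where $S_1 = \sum_{(x,r,w) \in \Phith} \ind(o \in B_r(x))\ind(z \in B_r(x))$ and $S_2$ is the analogous sum over ordered pairs of distinct grains in $\Phith$.

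First I would compute $\E S_1$ by the Campbell--Mecke formula for $\Phi$ combined with Slivnyak's theorem: under the reduced Palm distribution at $(x,r,w)$ the process is distributionally $\Phi \cup \{(x,r,w)\}$, so the probability that this grain is retained is $h(r,w)$ from Proposition~\ref{the:RetentionProbability}. Averaging the weight via~\eqref{eq:H1Integrated} and using translation invariance turn the diagonal contribution into
\[
\lambda \int_{\Rp} |B_r(o) \cap B_r(z)|\, h(r)\, F(dr).
\]
For $\E S_2$ I would apply the second-order Campbell--Mecke formula; two successive reductions show that the joint probability that both added grains $(x_1,r_1,w_1)$ and $(x_2,r_2,w_2)$ are retained equals $h_2(|x_1-x_2|, r_1, w_1, r_2, w_2)$ by Proposition~\ref{the:H2}. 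Averaging the weights via~\eqref{eq:H2Int} and performing the substitutions $y = x_1$, $x = x_2 - x_1$, then $x \mapsto z - x$, convert the off-diagonal contribution into
\[
\lambda^2 \int_{\Rp} \int_{\Rp} \int_{\R^d} |B_{r_1}(o) \cap B_{r_2}(x)|\, h_2(|x-z|, r_1, r_2)\, dx\, F(dr_1)F(dr_2).
\]

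Finally, to pass to the covariance I would subtract $\pth^2$. Using the formula~\eqref{eq:VolumeFraction} for $\pth$ together with the Fubini identity $\int_{\R^d} |B_{r_1}(o) \cap B_{r_2}(x)|\, dx = |B_{r_1}||B_{r_2}|$, one rewrites $\pth^2$ as the same triple integral but with $h(r_1)h(r_2)$ in place of $h_2(|x-z|, r_1, r_2)$. Subtraction then replaces $h_2$ by the retention covariance $q$ from~\eqref{eq:RetentionCovariance}, yielding~\eqref{eq:cov}. The main technical obstacle is bookkeeping in the two Slivnyak--Mecke reductions; the key observation---already present in the proofs of Propositions~\ref{the:RetentionProbability} and~\ref{the:H2}---is that retention of an added grain depends on $\Phi$ only through its neighbors, so Palm conditioning on a grain in $\Phi$ and inserting an external reference grain yield identical retention events.
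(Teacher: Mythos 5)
Your proposal is correct and follows essentially the same route as the paper: decompose $\pr(o,z\in \Xth)$ into the single-grain term $S_1$ and the distinct-grain term $S_2$ using the hard-core property, evaluate them by the Mecke and Slivnyak--Mecke formulas to produce $h$ and $h_2$, and then rewrite $\pth^2$ via the Fubini identity $\int_{\R^d}|B_{r_1}(o)\cap B_{r_2}(x)|\,dx = |B_{r_1}||B_{r_2}|$ so that subtraction replaces $h_2$ by $q$. The only cosmetic difference is that you index the sums over $\Phith$ directly while the paper sums over $\Phi$ with the retention indicator $\ind_{T(\Phi)}$; these are identical.
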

\begin{proof}
Let us express the covariance function as
\[
 \kth(z) = S_1(z) + S_2(z) - \pth^2,
\]
where $S_1(z)$ is the probability that a single grain in $\Phith$ simultaneously covers $o$ and
$z$, $S_2(z)$ is the probability that $o$ and $z$ are covered by distinct grains in $\Phith$, and
the volume fraction $\pth$ can be viewed as the probability that an arbitrary reference point in
$\R^d$ is covered by some grain in $\Phith$.

To write down an analytical expression for $S_1(z)$, recall first that by the hard-core property,
the indicator function of $\Xth$ can be written as
\[
 \ind_{\Xth}(y) = \sum_{(x,r,w) \in \Phi} f_y(\Phi; x,r,w),
\]
where $f_y(\Phi; x,r,w) = \ind(y \in B_r(x)) \ind_{T(\Phi)}(x,r,w)$ is the indicator for the event
that a grain $(x,r,w)$ covers $y$ and is contained in $\Phith$. Then
\[
 S_1(z) = \E \sum_{\mathclap{(x,r,w) \in \Phi}} f_o(\Phi; x,r,w) f_z(\Phi; x,r,w),
\]
Using Mecke's formula \cite[Thm. 3.2.5]{Schneider_Weil_2008} it's easy to see that
\[
 S_1(z) = \lambda \int_\Rp |B_r(o) \cap B_r(z)| \, h(r) \, F(dr),
\]
where $h(r) = \int h(r,w) \, G_r(dw)$.

The probability that $o$ and $z$ are covered by distinct grains in $\Phith$ can analogously be
written as
\[
 S_2(z)
 = \E \sum_{\substack{(x_1, r_1, w_1) \in \Phi \\ (x_2, r_2, w_2) \in \Phi \\ (x_1,r_1,w_1) \neq (x_2,r_2,w_2)}}
 f_o( \Phi; x_1,r_1,w_1 ) f_z ( \Phi; x_2,r_2,w_2 ).
\]
Using the Slivnyak--Mecke formula \cite[Cor.~3.2.3]{Schneider_Weil_2008}, it's not hard to verify that
\[
 S_2(z)
 = \lambda^2 \iiint |B_{r_1}(o) \cap B_{r_2}(x)| \, h_2(|x - z|, r_1, {r_2}) \, dx F(dr_1) F(dr_2),
\]
where $h_2$ is the pair retention probability defined by~\eqref{eq:H2}. The validity of the claim
now follows after representing $\pth$ using \eqref{eq:VolumeFraction} and the identity
$|B_{r_1}(o)| |B_{r_2}(o)| = \int |B_{r_1}(o) \cap B_{r_2}(x)| \, dx$ to note that
\begin{align*}
 \pth^2
 &= \lambda^2 \iiint |B_{r_1}(o) \cap B_{r_2}(x)| \, h(r_1) h(r_2) \, dx F(dr_1) F(dr_2).
\end{align*}
\end{proof}

\subsection{Two-point correlation function of thinned grain centers}

The \emph{two-point correlation function} $\xith(z)$ of the random point configuration $\Phith_g =
\{x: (x,r,w) \in \Phith\}$ of the thinned grain centers is defined as a function which satisfies
\[
 \cov ( \Phith_g ( A ), \Phith_g ( B ) ) = \lambdath^2 \int_A \int_B \xith(x - y) \, dx dy
\]
for all disjoint and bounded measurable sets $A, B \subset \R^d$, assuming such function exists.
This function, which in our case only depends on $|z|$, describes how much more ($\xith(z) > 0$)
or less ($\xith(z) < 0$) likely it is to observe a point at a distance $|z|$ from a typical point,
compared to observing a point in an arbitrary location. The two-point correlation function is
related to the pair-correlation function $\gth$ commonly used in statistics, via the formula
$\xith(z) = \gth(z) - 1$ (e.g.\ \cite{Stoyan_Kendall_Mecke_1995}).

\begin{proposition}
\label{the:TwoPointCorrelation}
The two-point correlation function of the thinned grain centers is given by
\begin{equation}
 \label{eq:TwoPointCorrelation}
 \xith(z) = \frac{\lambda^2}{\lambdath^2} \int_\Rp \int_\Rp q(|z|,r_1,r_2) \, F(dr_1) F(dr_2),
\end{equation}
where $\lambdath$ is the thinned germ density defined in~\eqref{eq:GermDensity}, and $q$ is the
retention covariance function defined in~\eqref{eq:RetentionCovariance}.
\end{proposition}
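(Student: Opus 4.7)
The plan is to compute $\cov(\Phith_g(A), \Phith_g(B))$ for disjoint bounded measurable sets $A, B \subset \R^d$ via the Slivnyak--Mecke formula, and then read off $\xith$ from the defining relation. The strategy closely mirrors the proof of Proposition~\ref{the:CovThinnedGrainCover}, but operates at the level of grain centers rather than of grain covers, which is why only the second-order term survives.

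First, I would expand the product as a sum over ordered pairs of distinct atoms of $\Phi$:
\[
 \Phith_g(A) \Phith_g(B) = \sum_{\substack{(x_1,r_1,w_1), (x_2,r_2,w_2) \in \Phi \\ (x_1,r_1,w_1) \ne (x_2,r_2,w_2)}} \ind(x_1 \in A) \ind(x_2 \in B) \, \ind_{T(\Phi)}(x_1,r_1,w_1) \ind_{T(\Phi)}(x_2,r_2,w_2),
\]
noting that the diagonal terms vanish by disjointness of $A$ and $B$. Applying the Slivnyak--Mecke formula to the Poisson measure $\Phi$ with intensity $\Lambda(dx,dr,dw) = \lambda dx F(dr) G_r(dw)$, and invoking the observation from the proof of Proposition~\ref{the:H2} that inserting the two reference atoms into $\Phi$ before thinning does not alter the obstruction sets of the reference grains, the expectation becomes
\[
 \E[\Phith_g(A) \Phith_g(B)] = \lambda^2 \int_A \int_B \int_\Rp \int_\Rp h_2(|x_1-x_2|, r_1, r_2) \, F(dr_1) F(dr_2) \, dx_1 dx_2,
\]
after integrating the weights against $G_{r_1} \otimes G_{r_2}$ as in~\eqref{eq:H2Int}.

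Next, by Campbell's formula together with~\eqref{eq:GermDensity} I have $\E \Phith_g(A) = \lambdath |A|$ and $\E \Phith_g(B) = \lambdath |B|$, so
\[
 \E \Phith_g(A) \, \E \Phith_g(B) = \lambda^2 \int_A \int_B \int_\Rp \int_\Rp h(r_1) h(r_2) \, F(dr_1) F(dr_2) \, dx_1 dx_2.
\]
Subtracting and invoking the definition~\eqref{eq:RetentionCovariance} of $q$ yields
\[
 \cov(\Phith_g(A), \Phith_g(B)) = \lambda^2 \int_A \int_B \int_\Rp \int_\Rp q(|x_1-x_2|, r_1, r_2) \, F(dr_1) F(dr_2) \, dx_1 dx_2.
\]
Comparing with the defining relation $\cov(\Phith_g(A),\Phith_g(B)) = \lambdath^2 \int_A \int_B \xith(x-y) dx dy$ and dividing by $\lambdath^2$ then gives~\eqref{eq:TwoPointCorrelation}.

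The only step requiring care is the Slivnyak--Mecke reduction: one must verify that under the reduced Palm distribution at the two atoms $(x_1,r_1,w_1)$ and $(x_2,r_2,w_2)$, the product $\ind_{T(\Phi)}(x_1,r_1,w_1)\ind_{T(\Phi)}(x_2,r_2,w_2)$ transforms into the thinning indicator for $\Phi$ augmented by the two reference atoms, and that the resulting probability is $h_2(|x_1-x_2|, r_1, w_1, r_2, w_2)$ from~\eqref{eq:H2}. Since no grain is its own neighbor, this is precisely the identity already exploited in the proof of Proposition~\ref{the:H2}, so the remaining work is essentially bookkeeping rather than any new estimate.
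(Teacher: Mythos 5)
Your proof is correct and follows essentially the same route as the paper's: the Slivnyak--Mecke formula for the second factorial moment, Mecke's (Campbell's) formula for the first moment, subtraction, and identification via the definition of $q$. The only difference is that you spell out the expansion over ordered pairs of distinct atoms and the vanishing of the diagonal term, which the paper leaves implicit; this is sound bookkeeping, not a different argument.
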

\begin{proof}
By using the Slivnyak--Mecke formula~\cite[Cor.~3.2.3]{Schneider_Weil_2008} one can check that
\[
 \E \Phith_g ( A ) \Phith_g ( B )
 = \lambda^2 \int_A \int_B \int_{\R+} \int_{\R+} h_2(|x-y|, r_1,r_2) \, F(dr_1) F(dr_2) \, dx dy.
\]
for all bounded and disjoint $A,B \subset \R^d$, where $h_2$ is the weight-averaged pair retention
probability defined in~\eqref{eq:H2Int}. On the other hand, Mecke's formula~\cite[Thm.
3.2.5]{Schneider_Weil_2008} implies that
\[
 \E \Phith_g (A)
 = \lambda \int_A \int_{\R_+} h(r) \, F(dr) \, dx,
\]
where $h(r)$ is the weight-averaged retention probability defined in~\eqref{eq:H1Integrated}. The
claim follows by combining the above two formulas and recalling the definition of the retention
covariance function~\eqref{eq:RetentionCovariance}.
\end{proof}

\section{Long-range behavior of second-order statistics}
\label{sec:AsymptoticSecondOrder}

In this section we assume that the grain radius distribution $F$ of the proposed Boolean model
follows a power law with tail exponent $\alpha > d$, by which we mean that the complementary
cumulative distribution function $\bar F(r) = 1-F(r)$ is regularly varying at infinity with
exponent $-\alpha$. In this case we can write
\[
 \bar F(r) = \ell(r) r^{-\alpha},
\]
where the function $\ell$ is slowly varying at infinity (see Appendix~\ref{sec:RegularVariation}
for details).

\subsection{Asymptotic covariance}
The following result describes the covariance function of the thinned grain cover for thinnings
where large grains have small retention probability.

\begin{proposition}
\label{thm1}
Assume that the radius distribution $F$ follows a power law with tail exponent $\alpha > d$. Assume
that the weight-averaged retention probability $h(r)$ decays to zero as $r \to \infty$, and that
for any $r_1,r_2 \ge 0$, the retention covariance function defined
in~\eqref{eq:RetentionCovariance} decays according to
\begin{equation}
 \label{eq:CovDecoupling}
 q(|z|, r_1, r_2) \sim q_\infty(r_1,r_2) \bar F(|z|) |z|^d
 \quad \text{as $|z| \to \infty$}.
\end{equation}
Then the covariance function of the thinned grain cover decays according to
\[
 \kth(z) \sim c \bar F(|z|) |z|^d
 \quad \text{as $|z| \to \infty$},
 \]
where
\[
 c = \lambda^2 |B_1|^2 \int_\Rp \int_\Rp r_1^d r_2^d \, q_\infty(r_1,r_2) \, F(dr_1) F(dr_2).
\]
\end{proposition}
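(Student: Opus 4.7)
My plan is to use the covariance decomposition of Proposition~\ref{the:CovThinnedGrainCover} to write $\kth(z) = T_1(z) + T_2(z)$, where
\[
 T_1(z) = \lambda \int_\Rp |B_r(o) \cap B_r(z)| \, h(r) \, F(dr)
\]
is the single-grain contribution, and
\[
 T_2(z) = \lambda^2 \iiint |B_{r_1}(o) \cap B_{r_2}(x)| \, q(|x-z|, r_1, r_2) \, dx \, F(dr_1) F(dr_2)
\]
is the two-grain contribution. I would then prove $T_1(z) = o(\bar F(|z|)|z|^d)$ and $T_2(z) \sim c\, \bar F(|z|)|z|^d$ separately.

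For the first term, the intersection $|B_r(o) \cap B_r(z)|$ vanishes unless $r \ge |z|/2$. Integration by parts combined with Karamata's theorem, applied to the regularly varying tail $\bar F$ of index $-\alpha$ with $\alpha > d$, gives $\int_{|z|/2}^\infty r^d \, F(dr) \sim \tfrac{\alpha}{\alpha-d} 2^{\alpha-d} |z|^d \bar F(|z|)$. Inserting this bound and splitting the $r$-integral at a radius beyond which $h(r) < \varepsilon$ shows $T_1(z)/(\bar F(|z|)|z|^d) \to 0$.

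For the second term, I would exploit that $|B_{r_1}(o) \cap B_{r_2}(x)|$ is supported in the compact set $\{|x| \le r_1 + r_2\}$ and satisfies $\int |B_{r_1}(o) \cap B_{r_2}(x)|\,dx = |B_1|^2 r_1^d r_2^d$. On this support $|x-z|/|z| \to 1$ as $|z| \to \infty$, so the uniform convergence theorem for regularly varying functions (Potter's inequality), together with hypothesis~\eqref{eq:CovDecoupling}, gives
\[
 \frac{1}{\bar F(|z|)|z|^d} \int |B_{r_1}(o) \cap B_{r_2}(x)|\, q(|x-z|, r_1, r_2)\, dx
 \;\longrightarrow\; |B_1|^2 r_1^d r_2^d \, q_\infty(r_1, r_2)
\]
pointwise in $(r_1, r_2)$. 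Multiplying by $\lambda^2$ and integrating against $F(dr_1)F(dr_2)$ yields the claimed constant $c$, provided one can justify the interchange of limit and integration.

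The main obstacle is exactly that interchange. The crude bound $|q| \le h(r_1) \wedge h(r_2) \le 1$ from Lemma~\ref{the:H2Bound} leads, after dividing by $\bar F(|z|)|z|^d$, to a dominating function that diverges, so Lebesgue dominated convergence does not apply directly. I would instead split the domain of $(r_1, r_2)$ into a compact rectangle $[0,R]^2$ and its complement: on the former, the inner limit is uniform and bounded convergence applies. For the complement I would refine the bound on $q$ using the explicit form $q(u, r_1, r_2) = \iint h(r_1,w_1) h(r_2,w_2)(e^{\tau} - 1)\, G_{r_1}(dw_1) G_{r_2}(dw_2)$ for $u > r_1 + r_2$, combined with Potter's inequality to keep $\bar F(|x-z|)|x-z|^d/(\bar F(|z|)|z|^d)$ uniformly bounded whenever $|x| \le r_1 + r_2 \le |z|/2$. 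For the remaining range $r_1 + r_2 > |z|/2$, a separate Karamata-type estimate controls the contribution since it forces at least one radius to be large. This should make the tail contribution of $\iint_{(r_1,r_2) \notin [0,R]^2}$ vanish as $R \to \infty$ uniformly in large $|z|$. Letting $|z| \to \infty$ and then $R \to \infty$ and combining with the $T_1$ estimate then gives $\kth(z) \sim c\,\bar F(|z|)|z|^d$.
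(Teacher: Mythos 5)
Your decomposition and your handling of the single-grain term $T_1$ match the paper's proof (which disposes of $T_1$ via Lemma~\ref{lemma_s1}). The gap is in the tail control for $T_2$.

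In the region you call case~(c), namely $r_1 + r_2 > |z|/2$, "at least one radius is large" is not by itself enough to make the contribution negligible. With the crude bound $|q|\le 1$ and $\int |B_{r_1}(o)\cap B_{r_2}(x)|\,dx = |B_1|^2 r_1^d r_2^d$, the normalized contribution from this region is of order
\[
 \frac{|B_1|^2}{\bar F(|z|)|z|^d}\iint_{r_1+r_2>|z|/2} r_1^d r_2^d\,F(dr_1)F(dr_2)
 \;\lesssim\;
 \frac{2|B_1|^2\bigl(\int r^d F(dr)\bigr)\int_{|z|/4}^\infty r^d F(dr)}{\bar F(|z|)|z|^d},
\]
and by Lemma~\ref{lemma_reg_var} the right side converges to a strictly positive constant, not to zero. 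To make it vanish you must insert the sharper bound $|q|\le h(r_1)\wedge h(r_2)$ from Lemma~\ref{the:H2Bound} and then invoke the hypothesis $h(r)\to 0$ so that a factor $\sup_{r>|z|/4}h(r)\to 0$ kills the surviving constant. This is exactly the paper's Lemma~\ref{lemma_tail_part}. Your plan only uses $h\to 0$ for $T_1$; but it is essential for $T_2$ as well, and indeed the proposition would be false without it (in the "large retained" case $h(r)\to1$, Proposition~\ref{thm1} is inapplicable, and the true limit constant is $\lambda c_{\alpha,d}(1-\pth)^2$ rather than the formula claimed here).

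A secondary issue: on $[0,R]^2$ you assert the convergence in \eqref{eq:CovDecoupling} is uniform, but the hypothesis is only pointwise in $(r_1,r_2)$. What you need is a uniform bound $q(|x-z|,r_1,r_2)\le c\,\bar F(|z|)|z|^d$ on the far region ($|x-z|\ge 2(r_1+r_2)$, $|x|\le r_1+r_2$, $|z|$ large) so that dominated convergence applies; this is the paper's Lemma~\ref{lemma_main_part}, which you gesture at via Potter's inequality but should state and prove. Once that is in hand the $[0,R]^2$ truncation is unnecessary: the paper simply splits the $(x,r_1,r_2)$ integral at $|x-z|\lessgtr 2(r_1+r_2)$, applies dominated convergence on the far set, and uses the $h\to 0$ tail estimate on the near set, with no truncation in $(r_1,r_2)$.
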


To prove Proposition~\ref{thm1} we need detailed results about the retention probabilities. The
following lemma allows us to use dominated convergence on a part of the domain.

\begin{lemma}
\label{lemma_main_part}
Assume that the radius distribution $F$ follows a power law with tail exponent $\alpha > d$. Then
there exist constants $c>0$ and $m > 0$ such that
\[
 0 \le q(|x - z|, r_1, r_2) \le c |z|^d \bar F (|z|)
\]
for all $x,z \in \R^d$ and all $r_1,r_2 \ge 0$ such that $|x| < r_1 + r_2 $, $|x - z|
\ge 2(r_1 + r_2)$, and $|z| > m$.
\end{lemma}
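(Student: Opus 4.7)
Set $u = |x-z|$. Combining Propositions~\ref{the:RetentionProbability} and~\ref{the:H2} and integrating over weights, one writes
\begin{equation*}
 q(u, r_1, r_2) = \int_\Rp \int_\Rp h(r_1, w_1) h(r_2, w_2) \left( e^{\tau} - 1 \right) G_{r_1}(dw_1) \, G_{r_2}(dw_2),
\end{equation*}
where $\tau = \tau(u,r_1,w_1,r_2,w_2)$ is the overlap integral from Proposition~\ref{the:H2}. Since $e^\tau \ge 1$, the lower bound $q \ge 0$ is immediate, given that the hypothesis $|x-z| \ge 2(r_1+r_2)$ ensures the two reference grains are disjoint and thus $h_2$ is given by the formula of Proposition~\ref{the:H2}.

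The crux is a quantitative bound of the form $\tau \le C u^d \bar F(u)$ that is uniform in $r_1, r_2$. Dropping $G_r[w_1 \vee w_2, \infty) \le 1$ and using that $B_{r+r_1}(x_1) \cap B_{r+r_2}(x_2)$ is empty for $r < (u-r_1-r_2)/2$, I would bound the intersection by the volume of the enclosing ball to obtain
\begin{equation*}
 \tau \le \lambda |B_1| \int_{(u-r_1-r_2)/2}^\infty (r+r_1+r_2)^d \, F(dr).
\end{equation*}
The hypotheses $|x| < r_1+r_2$ and $|x-z| \ge 2(r_1+r_2)$ force $r_1+r_2 \le u/2$, so the lower limit of integration is at least $u/4$ and $r + r_1 + r_2 \le 3r$ throughout the range. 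Hence $\tau \le \lambda\, 3^d |B_1| \int_{u/4}^\infty r^d F(dr)$.

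Since $\bar F$ is regularly varying with index $-\alpha$ and $\alpha > d$, Karamata's theorem yields $\int_t^\infty r^d F(dr) \sim \frac{\alpha}{\alpha - d} t^d \bar F(t)$ as $t \to \infty$; combined with slow variation, which makes $\bar F(u/4) \le C_0 \bar F(u)$ for large $u$, this gives $\tau \le C_1 u^d \bar F(u)$. Because $u^d \bar F(u) = \ell(u) u^{d-\alpha} \to 0$, I may enlarge $m$ so that $\tau \le 1$ throughout the region of interest, whence $e^\tau - 1 \le 2\tau$; bounding $h \le 1$ in the opening display then yields $q(u, r_1, r_2) \le 2 C_1 u^d \bar F(u)$.

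To translate this into a bound involving $|z|$, two triangle inequality estimates, using $|x|<r_1+r_2 \le u/2$, give the sandwich $(2/3)|z| \le u \le 2|z|$, and a final appeal to the regular variation of $\bar F$ upgrades $u^d \bar F(u) \le c|z|^d \bar F(|z|)$ for a constant $c$ independent of $r_1, r_2$, once $|z|$ exceeds a suitable threshold $m$. The only real obstacle is the uniformity of the Karamata tail estimate in $r_1, r_2$; once that is in hand, the rest is the elementary inequality $e^\tau - 1 \le 2\tau$ together with triangle-inequality bookkeeping.
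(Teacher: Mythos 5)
Your proposal is correct and follows essentially the same route as the paper: bound $q \le e^\tau - 1$, control $\tau$ by discarding $G_r[w_1\vee w_2,\infty) \le 1$ and dominating the intersection volume by a tail integral of $r^d$ starting at $u/4$, apply Karamata, linearize via $e^\tau - 1 \le 2\tau$ once $\tau \le 1$ for $|z|$ large, and transfer the estimate from $|x-z|$ to $|z|$ using the triangle-inequality sandwich $\tfrac23|z| \le |x-z| \le 2|z|$ together with regular variation. The paper simply packages your steps bounding $\int |B_{r_1+r}(o)\cap B_{r_2+r}(z)|\,F(dr) \lesssim |z|^d\bar F(|z|)$ into a standalone Lemma~\ref{lemma_int} rather than deriving them inline. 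The ``obstacle'' you flag at the end is already resolved by your own reduction: once the lower limit of integration is pushed down to $u/4$, independent of $r_1,r_2$, the Karamata tail estimate is automatically uniform over all admissible $(r_1,r_2)$, so there is no remaining gap.
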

\begin{proof}
Let $c_1$ and $u_1$ be the constants from Lemma~\ref{lemma_int}.
Using the assumption that the function $\bar F$ follows a power law with tail exponent $\alpha$,
choose $u_2$ such that $\bar F(2/3 r) / \bar F(r) \le 2 (2/3)^{-\alpha}$ for all $r > u_1$.
Choose $u_3$ such that $\lambda c_1 r^d \bar F(r) \le 1$ for all $r > u_2$.
Note that $|x| \le r_1+r_2 \le \frac12 |x-z|$ implies $|z| \le |x| + |x-z| \le \frac{3}{2}
|x-z|$ and let $m = \max \{u_1, \frac32u_2, \frac32u_3\}$.
Using Proposition~\ref{the:H2} and the definition of $m$ we have for all $|z| > m$.
\begin{align*}
 q(|x - z|, r_1, r_2)
 &\le \exp \left(  \lambda \int_\Rp | B_{r_1+r}(o) \cap B_{r_2+r}(|x - z|) |  F(dr) \right) - 1\\
 &\le \exp \left(  \lambda c_1 |x-z|^d \bar F (|x-z|) \right) - 1\\
 &\le 2 \lambda c_1 |x-z|^d \bar F (|x-z|).
\end{align*}
Note that $|x - z| \le |x| + |z| \le \frac12 |x-z| + |z|$
implies $|x-z| \le 2|z|$, and that $\bar F$ is a decreasing function.
Now for $c = 4 (2/3)^{-\alpha} 2^d \lambda c_1$ and $|z| > m$ we have
\begin{align*}
 q(|x - z|, r_1, r_2)
 \le 2 \lambda c_1 (2|z|)^d \bar F (\tfrac23|z|)
 \le c |z|^d \bar F (|z|).
\end{align*}
\end{proof}

\begin{lemma}
\label{lemma_tail_part}
%used also in small grains, must avoid regular variation here.
Fix $z \in \R^d$ and define
\[
 A(z) = \{ (x,r_1,r_2) \in \R^d \times \R_+ \times \R_+: |x-z| \le 2(r_1+r_2) \}.
\]
Then the retention covariance function $q$ satisfies
\begin{multline*}
 \iiint_{A(z)} |B_{r_1}(o) \cap B_{r_2}(x)| \, |q(|x-z|,r_1,r_2)| \, dx F(dr_1) F(dr_2) \\
 \le 2 |B_1|^2 \left( \int_{\R+} r^d F(dr) \right)
 \left( \int_{|z|/6}^\infty r^d F(dr) \right) \sup_{r>|z|/6} h(r),
\end{multline*}
where $h(r)$ is the weight-averaged retention probability defined by~\eqref{eq:H1Integrated}.
\end{lemma}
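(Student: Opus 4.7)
The plan is to combine the pointwise bound on $q$ from Lemma~\ref{the:H2Bound} with a short geometric observation that forces the larger of $r_1, r_2$ to exceed $|z|/6$ wherever the integrand is nonzero. First, I would apply Lemma~\ref{the:H2Bound} to replace $|q(|x-z|, r_1, r_2)|$ by $h(r_1) \wedge h(r_2)$, reducing the problem to bounding
\[
 J(z) \ = \ \iiint_{A(z)} |B_{r_1}(o) \cap B_{r_2}(x)| \, \bigl(h(r_1) \wedge h(r_2)\bigr) \, dx \, F(dr_1) F(dr_2).
\]

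Next, I would observe that $|B_{r_1}(o) \cap B_{r_2}(x)| = 0$ unless $|x| < r_1 + r_2$, so the integrand is supported on $\{|x| < r_1+r_2\} \cap A(z)$. On this set the triangle inequality yields
\[
 |z| \ \le \ |x| + |x - z| \ < \ (r_1 + r_2) + 2(r_1 + r_2) \ = \ 3(r_1 + r_2),
\]
hence $r_1 \vee r_2 > |z|/6$. Whichever radius is the larger therefore always exceeds $|z|/6$ on the support of the integrand, which will let us pull the supremum of $h$ out of the integral.

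Finally, I would split $A(z)$ into the two pieces $\{r_1 \le r_2\}$ and $\{r_1 > r_2\}$. On the first piece, $r_2 > |z|/6$ on the support, so $h(r_1) \wedge h(r_2) \le h(r_2) \le \sup_{r > |z|/6} h(r)$. Enlarging the $x$-integration to all of $\R^d$ and using Fubini to evaluate $\int_{\R^d} |B_{r_1}(o) \cap B_{r_2}(x)| \, dx = |B_1|^2 r_1^d r_2^d$, this piece is bounded by
\[
 |B_1|^2 \sup_{r > |z|/6} h(r) \left(\int_0^\infty r_1^d \, F(dr_1)\right) \left(\int_{|z|/6}^\infty r_2^d \, F(dr_2)\right).
\]
The contribution from $\{r_1 > r_2\}$ is bounded symmetrically by the same quantity, and summing the two pieces yields the stated inequality with the overall factor of $2$. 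There is no real obstacle here; the argument is a direct consequence of the geometric constraint $r_1 \vee r_2 > |z|/6$ combined with Fubini's theorem applied to the intersection volume.
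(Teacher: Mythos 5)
Your proposal is correct and follows essentially the same argument as the paper: the key geometric observation that $|x|<r_1+r_2$ and $|x-z|\le 2(r_1+r_2)$ together force $r_1\vee r_2>|z|/6$, the bound $|q|\le h(r_1)\wedge h(r_2)$ from Lemma~\ref{the:H2Bound}, a two-way split by which radius dominates (the paper splits into $\{r_1>|z|/6\}\cup\{r_2>|z|/6\}$ rather than $\{r_1\le r_2\}$ versus $\{r_1>r_2\}$, but the symmetry step yielding the factor $2$ is identical), and Fubini to evaluate $\int_{\R^d}|B_{r_1}(o)\cap B_{r_2}(x)|\,dx=|B_1|^2 r_1^d r_2^d$. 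The differences are purely cosmetic.
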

\begin{proof}
Define $d\mu$ as shorthand for $dx F(dr_1) F(dr_2)$, and denote the integrand by $f_z(x,r_1,r_2)$.
Observe that $f_z$ vanishes outside the set $A_0 = \{(x,r_1,r_2): |x| < r_1 + r_2\}$. Observe also
that $A(z) \cap A_0 \subset A_1(z) \cup A_2(z)$, where $A_i(z) = \{(x,r_1,r_2): r_i > |z|/6\}$. As
a consequence,
\[
 \int_{A(z)} f_z d\mu
 = \int_{A(z) \cap A_0} f_z d\mu
 \le \int_{A_1(z)} f_z d\mu + \int_{A_2(z)} f_z d\mu
 = 2 \int_{A_1(z)} f_z d\mu,
\]
where the last equality is due to the symmetry of $f_z$ with respect to its last two arguments.
Recall that $|q(|x-z|, r_1, r_2)| \le h(r_1)$ by Lemma~\ref{the:H2Bound}. Now
\begin{align*}
 \int_{A_1(z)} f_z d\mu
 &\le \iiint \ind_{(|z|/6,\infty)}(r_1) \, |B_{r_1}(o) \cap B_{r_2}(x)| \, h(r_1)
 \, dx F(dr_1) F(dr_2) \\
 &\le J(z) \sup_{r>|z|/6} h(r),
\end{align*}
where
\begin{align*}
 J(z)
 &= \iiint \ind_{(|z|/6,\infty)}(r_1) \, |B_{r_1}(o) \cap B_{r_2}(x)| \, dx F(dr_1) F(dr_2) \\
 &= |B_1|^2 \left( \int_{\R+} r^d F(dr) \right) \left( \int_{|z|/6}^\infty r^d F(dr) \right).
\end{align*}
\end{proof}

\begin{proof}[Proof of Proposition~\ref{thm1}]
By Proposition~\ref{the:CovThinnedGrainCover}, we can write
\[
 \frac{\kth(z)}{\bar F(|z|) |z|^d}  = \lambda I_1(z) + \lambda^2 (I_2(z) + I_3(z)),
\]
where
\[
 I_1(z) = (\bar F(|z|) |z|^d)^{-1}  \int_\Rp |B_r(o) \cap B_r(z)| h(r)  F(dr),
\]
and where
\begin{align*}
 I_2(z) &= \iiint_{A_z} f_z(x,r_1,r_2) \, dx F(dr_1) F(dr_2), \\
 I_3(z) &= \iiint_{A_z^c} f_z(x,r_1,r_2) \, dx F(dr_1) F(dr_2),
\end{align*}
denote the integrals of the function
\[
 f_z(x,r_1,r_2)
 = |B_{r_1}(o) \cap B_{r_2}(x) | \left( \frac{q(|x - z|, r_1, r_2)}{\bar F(|z|) |z|^d} \right)
\]
over the set
\[
 A_z = \{ (x,r_1,r_2): |x-z| \le 2(r_1+r_2) \}
\]
and its complement, respectively.

The integral $I_1(z) \to 0$ as $|z| \to \infty$ by Lemma~\ref{lemma_s1}, because $h(r) \to 0$ as
$r \to \infty$ by assumption.

We will next show that $I_2(z) \to 0$ as well. We apply Lemma~\ref{lemma_tail_part}, to conclude
that
\[
 |I_2(z)| \le c_2 ( |z|^d \bar F(|z|) )^{-1} \left( \int_{|z|/6}^\infty r^d F(dr) \right) \sup_{r > |z|/6}
 h(r),
\]
where $c_2 = 2|B_1|^2 \int r^d F(dr)$. The right side above tends to zero as $|z| \to \infty$,
because $h(r) \to 0$ as $r \to \infty$, and because the integral on the right side above is
asymptotically equivalent to constant multiple of $|z|^d \bar F(|z|)$ by
Lemma~\ref{lemma_reg_var}.

To analyze the limiting behavior of $I_3(z)$ as $|z| \to \infty$, note that
assumption~\eqref{eq:CovDecoupling} and Lemma~\ref{the:SlowVariation} imply that for any
$x,r_1,r_2$,
\[
 q(|x-z|,r_1,r_2)
 \ \sim \ q_\infty(r_1,r_2) |x-z|^d \bar F(|x-z|)
 \ \sim \ q_\infty(r_1,r_2) |z|^d \bar F(|z|).
\]
By the definition of $A_z$, it thus follows that
\[
  f_z(x,r_1,r_2) \ind_{A_z^c}(x,r_1,r_2) \to q_\infty(r_1,r_2) | B_{r_1}(o) \cap B_{r_2}(x) |
\]
as $|z| \to \infty$. Moreover, by Lemma~\ref{lemma_main_part} there exists a constant $c_3$ such
that
\[
 | f_z(x,r_1,r_2) \ind_{A_z^c}(x,r_1,r_2)| \le c_3 | B_{r_1}(o) \cap B_{r_2}(x) |
\]
for all $x,r_1,r_2$ and all large enough $z$. Because the right side above is integrable with
respect to $dx F(dr_1) F(dr_2)$, Lebesgue's dominated convergence theorem shows that
\begin{align*}
 \lim_{|z| \to \infty} I_3(z)
 &= \iiint q_\infty(r_1,r_2) | B_{r_1}(o) \cap B_{r_2}(x) | \, dx F(dr_1) F(dr_2) \\
 &= |B_1|^2 \iint q_\infty(r_1,r_2) r_1^d r_2^d \, F(dr_1) F(dr_2),
\end{align*}
which completes the proof of Proposition~\ref{thm1}.
\end{proof}

\subsection{Asymptotic two-point correlation}

\begin{proposition}
\label{lemma_pair_cor}
Assume that the radius distribution $F$ follows a power law with tail exponent $\alpha > d$ and
\begin{equation}
 \label{eq:pair_cor_assumption}
 q(|z|, r_1, r_2) \sim q_\infty(r_1, r_2)|z|^d \bar F(|z|)
 \quad \text{as $|z| \to \infty$}.
\end{equation}
Then
\[
 \xith(z)
 \sim c |z|^d \bar F(|z|)
 \quad \text{as $|z| \to \infty$},
\]
where
\[
 c = \frac{\lambda^2}{\lambdath^2}
  \int_\Rp \int_\Rp q_\infty(r_1, r_2) F(dr_1) F(dr_2).
\]
\end{proposition}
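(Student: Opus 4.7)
The plan is to plug the explicit formula from Proposition~\ref{the:TwoPointCorrelation} into the ratio $\xith(z)/(|z|^d\bar F(|z|))$ and bring the limit $|z|\to\infty$ inside the double integral over $(r_1,r_2)$. Assumption~\eqref{eq:pair_cor_assumption} provides pointwise convergence of the integrand to $q_\infty(r_1,r_2)$, so the only issue is to justify dominated convergence against the probability measure $F(dr_1) F(dr_2)$.

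I would split the integration domain into $D_1(z)=\{(r_1,r_2):r_1+r_2\le |z|/2\}$ and its complement $D_2(z)$. On $D_2(z)$ the crude bound $|q(|z|,r_1,r_2)|\le 1$ from Lemma~\ref{the:H2Bound} already suffices: since $r_1+r_2>|z|/2$ forces $r_1\vee r_2>|z|/4$, a union bound yields $\int_{D_2(z)} F(dr_1)F(dr_2)\le 2\bar F(|z|/4)$, and the regular variation of $\bar F$ with index $-\alpha$ gives $\bar F(|z|/4)/\bar F(|z|)\to 4^{\alpha}$, so after dividing by $|z|^d\bar F(|z|)$ the contribution from $D_2(z)$ is $O(|z|^{-d})$ and vanishes.

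On $D_1(z)$ I would establish a uniform bound
\[
 \frac{|q(|z|,r_1,r_2)|}{|z|^d\bar F(|z|)}\le c,\qquad (r_1,r_2)\in D_1(z),\ |z|\ \text{large},
\]
by mimicking the argument of Lemma~\ref{lemma_main_part} but without the auxiliary spatial variable $x$. Writing, via Proposition~\ref{the:H2} and Fubini,
\[
 q(|z|,r_1,r_2)=\int\int h(r_1,w_1)\,h(r_2,w_2)\,(e^{\tau}-1)\,G_{r_1}(dw_1)G_{r_2}(dw_2)
\]
for $|z|>r_1+r_2$, and bounding $G_r[w_1\vee w_2,\infty)\le 1$ inside $\tau$, the intersection estimate from Lemma~\ref{lemma_int} yields $\tau\le c_1|z|^d\bar F(|z|)$ throughout $D_1(z)$. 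Because this bound tends to zero as $|z|\to\infty$, the elementary inequality $e^\tau-1\le e\,\tau$ (valid for $\tau\in[0,1]$) transfers to a bound $|q(|z|,r_1,r_2)|\le c\,|z|^d\bar F(|z|)$, and integrable domination on $D_1(z)$ follows since $F(dr_1)F(dr_2)$ is a probability measure. Dominated convergence combined with the pointwise limit \eqref{eq:pair_cor_assumption} then produces $\int\int q_\infty(r_1,r_2)\,F(dr_1)F(dr_2)$, and multiplying by $\lambda^2/\lambdath^2$ finishes the proof.

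The main obstacle is the uniform bound on $D_1(z)$, which is essentially the spatial-variable-free analogue of Lemma~\ref{lemma_main_part}; once that estimate is in hand, both the dominated convergence step on $D_1(z)$ and the disposal of $D_2(z)$ are routine applications of regular variation.
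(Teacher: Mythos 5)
Your proof is correct and follows essentially the same route as the paper's: the identical split of the $(r_1,r_2)$-domain at $r_1+r_2\le|z|/2$, the crude bound $|q|\le 1$ combined with the union bound $2\bar F(|z|/4)$ on the tail set, and dominated convergence against $F\times F$ on the main set. The one small redundancy is that you re-derive the uniform bound $|q(|z|,r_1,r_2)|\le c|z|^d\bar F(|z|)$ on $D_1(z)$ from scratch, whereas Lemma~\ref{lemma_main_part} already supplies it directly upon taking $x=o$ (so that $|x|<r_1+r_2$ holds trivially and $|x-z|=|z|$), which is exactly what the paper does.
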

\begin{proof}
Using \eqref{eq:TwoPointCorrelation} we can write
\[\frac{\xith(z)}{|z|^d \bar F(|z|)}
 = \frac{\lambda^2}{\lambdath^2} ( I_1(z) + I_2(z) ),
\]
where
\begin{align*}
I_1(z) &= \iint_{A_z} f_z(r_1, r_2) F(dr_1) F(dr_2),\\
I_2(z) &= \iint_{A_z^c} f_z(r_1, r_2) F(dr_1) F(dr_2),
\end{align*}
denote the integrals of
\[
f_z(r_1, r_2) = \frac{q(|z|, r_1, r_2)}{|z|^d \bar F(|z|)}
\]
over the set
\[
 A_z = \{ (r_1,r_2) \in \R_+ \times \R_+: r_1+r_2 > |z|/2 \}
\]
and its complement, respectively.

Observe that $A_z \subset A_1(z) \cup A_2(z)$, where $A_i(z) = \{(r_1,r_2): r_i > |z|/4\}$, and
that $|q| \le 1$ by Lemma~\ref{the:H2Bound}. As a consequence,
\begin{multline*}
 \int_{A_z} |q(|z|, r_1, r_2)| F(dr_1) F(dr_2)
 \le (F \times F)(A_1(z)) + (F \times F)(A_2(z))
 = 2 \bar F (|z|/4),
\end{multline*}
which implies $I_1(z) \to 0$ as $|z| \to \infty$.

Note that $f_z(r_1, r_2)\ind_{A_z^c}(r_1, r_2) \to q_\infty(r_1, r_2)$ by assumption
\eqref{eq:pair_cor_assumption} and the definition of $A_z$. By Lemma~\ref{lemma_main_part},
$f_z(r_1, r_2)\ind_{A_z^c}(r_1, r_2)$ is bounded for large $z$ uniformly on $r_1$ and $r_2$.
Lebesgue's dominated convergence theorem then shows that
\[
\lim_{|z|\to\infty} I_2(z)
= \iint q_\infty(r_1, r_2) F(dr) F(ds).
\]
\end{proof}

\section{Isolated grains retained}
\label{sec:isolated}

In this section we study the thinning where only isolated grains are retained. In the general
framework of Section~\ref{sec:Thinning}, this is achieved by assigning unit weight to every grain,
so that $G_r(dw) = \delta_1(dw)$. For nonrandom equally sized grains this corresponds to the
classical Matérn type~I thinning.

\begin{theorem}
\label{the:isolated}
Assume that the radius distribution $F$ follows a power law with tail exponent $\alpha > d$, so
that $1-F(r) = \ell(r) r^{-\alpha}$ for some slowly varying function $\ell$. Then the thinned
radius distribution is bounded by
\[
 \Fbarth(r) \le \frac{\lambda}{\lambdath} e^{-\lambda |B_1| r^d},
\]
the covariance function of the thinned grain cover decays according to
\[
 \kth(z) \sim \lambda c_{\alpha,d} \pth^2 \ell(|z|) |z|^{-(\alpha-d)}
 \quad \text{as $|z| \to \infty$},
\]
and the two-point correlation function of the thinned grain centers according to
\[
 \xith(z) \sim \lambda c_{\alpha,d} \ell(|z|) |z|^{-(\alpha-d)}
 \quad \text{as $|z| \to \infty$},
\]
where the constant $c_{\alpha,d}$ is given by~\eqref{eq:calphad}.
\end{theorem}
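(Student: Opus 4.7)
The plan is to treat the three claims in sequence. The first (tail bound on $\Fth$) is essentially immediate from the choice $G_r = \delta_1$; the remaining two will be derived by verifying the hypotheses of Propositions~\ref{thm1} and \ref{lemma_pair_cor}, for which the only nontrivial input is the asymptotic behavior of the retention covariance function $q$.

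\medskip
\textbf{Tail bound on the typical grain.} With unit weights, $G_s[1,\infty) = 1$ for all $s$, so \eqref{eq:H1} collapses to
\[
 h(r) \,=\, h(r,1) \,=\, \exp\!\Bigl(-\lambda \textstyle\int_\Rp |B_{r+s}| \, F(ds)\Bigr).
\]
Since $|B_{r+s}| \ge |B_r| = |B_1|r^d$, we conclude $h(r) \le e^{-\lambda|B_1|r^d}$. Plugging this into the formula \eqref{eq:radiusth} for $\Fth$ and bounding $e^{-\lambda|B_1|s^d} \le e^{-\lambda|B_1|r^d}$ inside the integrand yields the stated bound.

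\medskip
\textbf{Asymptotics of $q$.} Since all weights equal one, Proposition~\ref{the:H2} gives, for $u > r_1 + r_2$,
\[
 h_2(u,r_1,r_2) \,=\, h(r_1)h(r_2)\, e^{\tau(u,r_1,r_2)}, \qquad \tau(u,r_1,r_2) \,=\, \lambda\textstyle\int_\Rp |B_{r+r_1}(x_1)\cap B_{r+r_2}(x_2)|\, F(dr),
\]
with $|x_1-x_2|=u$. Hence
\[
 q(u,r_1,r_2) \,=\, h(r_1)h(r_2)\bigl(e^{\tau(u,r_1,r_2)} - 1\bigr).
\]
The key step is to show that $\tau(u,r_1,r_2) \to 0$ and, more precisely, $\tau(u,r_1,r_2) \sim \lambda c_{\alpha,d}\, \bar F(u) u^d$ as $u\to\infty$, independently of $r_1,r_2$. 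This is the content of the intersection-volume lemma in the appendix (the same calculation that yields the Boolean covariance asymptotic $k(z)\sim \lambda(1-p)^2 c_{\alpha,d}\bar F(|z|)|z|^d$): only radii $r$ with $2r+r_1+r_2\ge u$ contribute, and the Karamata tail of $F$ for $r^d$ produces the claimed asymptotic. Using $e^t - 1 \sim t$, we obtain
\[
 q(|z|,r_1,r_2) \,\sim\, q_\infty(r_1,r_2)\,\bar F(|z|)|z|^d, \qquad q_\infty(r_1,r_2) \,=\, \lambda c_{\alpha,d}\, h(r_1)h(r_2),
\]
i.e.\ the decoupling hypothesis \eqref{eq:CovDecoupling} and \eqref{eq:pair_cor_assumption}. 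The main obstacle in the whole argument is this uniform asymptotic for $\tau$: one must justify that the error from replacing $r+r_i$ by $r$ in the intersection volume is negligible against the regularly varying main term.

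\medskip
\textbf{Assembling the constants.} With $q_\infty$ identified and $h(r)\to 0$ verified, Proposition~\ref{thm1} yields $\kth(z)\sim c\,\bar F(|z|)|z|^d$ with
\[
 c \,=\, \lambda^2 |B_1|^2 \textstyle\int\!\!\int r_1^d r_2^d\, q_\infty(r_1,r_2)\, F(dr_1)F(dr_2)
 \,=\, \lambda^3 c_{\alpha,d} |B_1|^2 \Bigl(\textstyle\int r^d h(r)\, F(dr)\Bigr)^{\!2}.
\]
Recognizing $\pth = \lambda|B_1|\int r^d h(r)\,F(dr)$ from \eqref{eq:VolumeFraction}, this collapses to $c = \lambda c_{\alpha,d}\,\pth^2$, giving the stated covariance asymptotic. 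Similarly, Proposition~\ref{lemma_pair_cor} gives $\xith(z)\sim c'\,\bar F(|z|)|z|^d$ with
\[
 c' \,=\, \frac{\lambda^2}{\lambdath^2}\textstyle\int\!\!\int q_\infty(r_1,r_2)\, F(dr_1)F(dr_2)
 \,=\, \frac{\lambda^3 c_{\alpha,d}}{\lambdath^2}\Bigl(\textstyle\int h(r)\,F(dr)\Bigr)^{\!2},
\]
which simplifies to $\lambda c_{\alpha,d}$ upon substituting $\lambdath = \lambda\int h(r)\,F(dr)$ from \eqref{eq:GermDensity}. Rewriting $\bar F(|z|)|z|^d = \ell(|z|)|z|^{-(\alpha-d)}$ completes the proof.
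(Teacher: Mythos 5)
Your proof is correct and follows essentially the same route as the paper: compute $h(r)$ and $q$ explicitly for unit weights, invoke Lemma~\ref{lemma_average_intersection} to obtain the decoupling $q(|z|,r_1,r_2)\sim \lambda c_{\alpha,d}h(r_1)h(r_2)\,\bar F(|z|)|z|^d$, and then feed this into Propositions~\ref{thm1} and \ref{lemma_pair_cor}, simplifying the constants via \eqref{eq:VolumeFraction} and \eqref{eq:GermDensity}. One cosmetic remark: the needed asymptotic for $\tau$ is pointwise in $(r_1,r_2)$, not a uniform one as your wording suggests; the dominated-convergence bookkeeping that makes the pointwise statement sufficient is already built into Propositions~\ref{thm1} and \ref{lemma_pair_cor}.
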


\begin{proof}
Because the weights are deterministic the retention probabilities have simple formulas
\[
 h(r) = h(r, 1)
 = \exp \left( - \lambda |B_1| \int_\Rp (r+s)^d \, F (ds) \right)
\]
and
\[
 h_2(|z|, r_1, r_2)
 = h(r_1) h(r_2) \exp \left( \lambda \int_\Rp | B_{r_1+s}(o) \cap B_{r_2+s}(z) | \, F(ds) \right).
\]
The tail of the thinned radius distribution \eqref{eq:radiusth} is
\[\Fbarth(r) = \frac{\lambda}{\lambdath}\int_r^\infty h(s) F(ds)
\le \frac{\lambda}{\lambdath} h(r)
\le \frac{\lambda}{\lambdath} e^{-\lambda |B_1|r^d}. \]

To show the claim for the covariance and two-point correlation functions, we will use
Proposition~\ref{thm1} and Proposition~\ref{lemma_pair_cor} respectively. For that we need to show
that \eqref{eq:CovDecoupling} holds. By Lemma \ref{lemma_average_intersection} we have for the
average intersection volume in $h_2$ above
\[
 \int_\Rp | B_{r_1+s}(o) \cap B_{r_2+s}(z) | \, F(ds) \sim c |z|^d \bar F(|z|).
\]
Because the right hand side goes to zero as $|z| \to \infty$ we
can use the fact that $\lim_{t \to \infty} (e^t - 1)/t = 1$ to obtain
\eqref{eq:CovDecoupling} with
\[
 q_\infty(r_1, r_2) = h(r_1)h(r_2) \lambda c_{\alpha, d}.
\]

Using Proposition~\ref{thm1} we find that $\kth(z) \sim c_1 |z|^d \bar F(|z|)$.
Using the formula for volume fraction \eqref{eq:VolumeFraction} we also find the constant
$c_1 = \lambda   \pth^2 c_{\alpha, d}$.
Similarly by Proposition~\ref{lemma_pair_cor} we find that $\xith(z) \sim c_2 |z|^d \bar F(|z|)$.
With the help of germ density \eqref{eq:GermDensity} we have
$c_2 = \lambda c_{\alpha, d}$.
\end{proof}

\section{Random grains retained}
\label{sec:random}

Here we assume that each grain in the proposed Boolean model is assigned a random weight
independently of the other grains, according to some continuous distribution function. The
continuity ensures that there will be no tie breaks. Because the shape of the weight distribution
does not affect the retention probabilities considered here, as long as it is continuous, we may
without loss of generality assume that $G_r(dw) = 1_{(0,1)}(w) dw$, the uniform distribution on
$(0,1)$. Note that for nonrandom equally sized grains, this corresponds to the classical \matern
type~II thinning.

\begin{theorem}
\label{the:random}
Assume that the radius distribution $F$ follows a power law with tail exponent $\alpha > d$, so
that $1-F(r) = \ell(r) r^{-\alpha}$ for some slowly varying function $\ell$. Then the thinned
radius distribution decays according to
\[
 \Fbarth(r) \sim (\lambdath |B_1|)^{-1} \frac{\alpha}{\alpha + d} \ell(r) r^{-(\alpha+d)}
 \quad \text{as $r \to \infty$},
\]
the covariance function of the thinned grain cover according to
\[
 \kth(z) \sim c_1 \ell(|z|) |z|^{-(\alpha-d)}
 \quad \text{as $|z| \to \infty$},
\]
and the two-point correlation function of the thinned grain centers according to
\[
 \xith(z) \sim c_2 \ell(|z|) |z|^{-(\alpha-d)}
 \quad \text{as $|z| \to \infty$},
\]
for some $c_1, c_2 \in (0, \infty)$.
\end{theorem}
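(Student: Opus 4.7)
The plan is to specialize the formulas of Section~\ref{sec:Thinning} to uniform weights $G_r(dw) = \ind_{(0,1)}(w)\,dw$. Setting $m(r) = \lambda |B_1| \int_\Rp (r+s)^d F(ds)$, Proposition~\ref{the:RetentionProbability} gives $h(r,w) = e^{-m(r)(1-w)}$, and integrating in $w$ yields $h(r) = (1-e^{-m(r)})/m(r)$. Since $\int s^d F(ds) < \infty$, a binomial expansion plus dominated convergence show $m(r) \sim \lambda |B_1| r^d$, whence $h(r) \sim (\lambda |B_1| r^d)^{-1}$ as $r \to \infty$.

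For the tail of the typical radius distribution, formula~\eqref{eq:radiusth} combined with the preceding asymptotic gives
\[
 \Fbarth(r) \sim \frac{1}{\lambdath |B_1|} \int_r^\infty s^{-d} F(ds).
\]
A Stieltjes integration by parts rewrites the right-hand integral as $r^{-d}\bar F(r) - d \int_r^\infty \bar F(s) s^{-d-1}\, ds$. Applying Karamata's theorem to the regularly varying function $\bar F(s) s^{-d-1} = \ell(s) s^{-(\alpha+d+1)}$, the remaining integral is $\sim \frac{1}{\alpha+d}\, \ell(r) r^{-(\alpha+d)}$, and combining the two terms yields $\int_r^\infty s^{-d} F(ds) \sim \frac{\alpha}{\alpha+d}\, \ell(r) r^{-(\alpha+d)}$, which delivers the claimed tail.

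For the covariance and the two-point correlation I would verify the decoupling hypothesis~\eqref{eq:CovDecoupling} and then invoke Propositions~\ref{thm1} and~\ref{lemma_pair_cor}. By Proposition~\ref{the:H2} with $G_r[w,\infty) = 1-w$,
\[
 h_2(u, r_1, w_1, r_2, w_2) = h(r_1, w_1) h(r_2, w_2) \exp\{\lambda\, (1-w_1 \vee w_2)\, V(u, r_1, r_2)\},
\]
where $V(u, r_1, r_2) = \int |B_{r+r_1}(x_1) \cap B_{r+r_2}(x_2)|\, F(dr)$ satisfies $V \sim c_{\alpha,d}\, \bar F(u) u^d$ by Lemma~\ref{lemma_average_intersection}, so $V \to 0$. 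Taylor-expanding $e^\tau - 1 \sim \tau$, averaging over weights, and substituting $u_i = 1 - w_i$ (so that $1 - w_1 \vee w_2 = u_1 \wedge u_2$) give
\[
 q(u, r_1, r_2) \sim \lambda\, V(u, r_1, r_2)\, I(m(r_1), m(r_2)),
\]
with $I(m_1, m_2) = \int_0^1 \int_0^1 e^{-m_1 u_1 - m_2 u_2} (u_1 \wedge u_2)\, du_1\, du_2$. Hence~\eqref{eq:CovDecoupling} holds with $q_\infty(r_1, r_2) = \lambda\, c_{\alpha,d}\, I(m(r_1), m(r_2))$.

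It remains to check that the constants $c_1, c_2$ produced by Propositions~\ref{thm1} and~\ref{lemma_pair_cor} lie in $(0,\infty)$. The crude bound $I(m_1, m_2) \le 1/3$, together with the moment condition $\int r^d F(dr) < \infty$, makes both $\iint q_\infty F(dr_1) F(dr_2)$ and $\iint r_1^d r_2^d q_\infty F(dr_1) F(dr_2)$ finite, while strict positivity follows from $I > 0$ on $(0,\infty)^2$. The main technical nuisance I anticipate is making the $\sim$ in the expansion of $q(u, r_1, r_2)$ uniform enough in $(r_1, r_2)$ to pass to the limit under the outer integrals; this uniformity is however built into Propositions~\ref{thm1} and~\ref{lemma_pair_cor} through the domination estimates of Lemmas~\ref{lemma_main_part} and~\ref{lemma_tail_part}, whose hypotheses (notably $h(r) \to 0$) have just been verified.
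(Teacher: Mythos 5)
The proposal is correct and takes essentially the same approach as the paper: it computes $h(r,w)$ and $h(r)$ for uniform weights, extracts the asymptotic $h(r)\sim(\lambda|B_1|r^d)^{-1}$, obtains the tail of $\Fth$ via Karamata (the integration-by-parts argument reproves Lemma~\ref{lemma_reg_var}), derives the weight-averaged retention covariance $q$ from Proposition~\ref{the:H2}, identifies $q_\infty(r_1,r_2)=\lambda c_{\alpha,d}\,I(m(r_1),m(r_2))$, and feeds it into Propositions~\ref{thm1} and~\ref{lemma_pair_cor}. The only substantive difference is cosmetic: you bound the inner double integral by $I\le 1/3$ instead of $q_\infty\le\lambda c_{\alpha,d}$, and you gesture at rather than spell out the dominated-convergence step (the paper uses $|e^t-1|\le(e-1)t$); both gaps are harmless.
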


\begin{proof}
The retention probability of a grain with radius $r$ and weight $w \in (0, 1)$ is
\[
 h(r, w)
 = \exp \left ( -\lambda \int_\Rp G_R[w, \infty) | B_{r+s}(o) | F (ds) \right)
 = \exp ( -\lambda (1-w) b(r) ),
\]
where
\[
 b(r) =  \int_{\Rp} |B_{r+s}(o)| \, F(ds).
\]
The weight-averaged retention probability thus equals
\begin{align*}
 h(r)
 = \int_0^1 e^{ -\lambda (1-w) b(r) } dw
 = \frac{1 - e^{ - \lambda b(r)}}{\lambda b(r)}.
\end{align*}
Because $b(r) \to \infty$ as $r \to \infty$, it follows that $\lim_{r \to \infty} h(r) = 0$. The
first condition of Proposition~\ref{thm1} is thus satisfied.

Note that $b(r) \sim |B_r|$ as $r \to \infty$, which implies that
\[
 h(r) \sim (\lambda b(r))^{-1} \sim (\lambda |B_1| r^d)^{-1}.
\]
By Lemma \ref{lemma:asympt_int}, the thinned radius distribution \eqref{eq:radiusth} is
\[
 \Fbarth(r)
 = \frac{\lambda}{\lambdath}\int_r^\infty h(s) F(ds)
 \sim (\lambdath |B_1|)^{-1} \int_r^\infty s^{-d} F(ds).
\]
Furthermore by Lemma \ref{lemma_reg_var}
\[
 \Fbarth(r) \sim (\lambdath |B_1|)^{-1} \frac{\alpha}{\alpha+d} r^{-d}\bar F(r).
\]

The pair retention probability equals
\begin{align*}
 &h_2(|z|, r_1, r_2, w_1, w_2) \\
 = & h(r_1, w_1) h(r_2, w_2)
 \exp\left( \lambda \int_\Rp G_R[w_1 \vee w_2, \infty) | B_{r_1+s}(o) \cap B_{r_2+s}(z) | F (ds) \right)\\
 = & \exp\Big( -\lambda (1-w_1) b(r_1) -\lambda (1-w_2) b(r_2) + \lambda (1 - w_1 \vee w_2) a_z(r_1, r_2)
 \Big),
\end{align*}
where
\[
 a_z(r_1,r_2) = \int_{\R+} |B_{r_1+s}(o) \cap B_{r_2+s}(z) | \, F(ds).
\]
From this expression we see that the retention covariance function defined in~\eqref{eq:RetentionCovariance}
equals
\[
 q(|z|,r_1,r_2)
 = \int_0^1 \int_0^1 e^{-\lambda b(r_1) (1-w_1)} e^{-\lambda b(r_2) (1-w_2)} \left( e^{\lambda (1- w_1 \vee w_2) a_z(r_1,r_2)} - 1 \right) dw_1 dw_2.
\]
As $|z| \to \infty$, Lemma~\ref{lemma_average_intersection} shows that the term in parentheses above is asymptotically equivalent to
\[
 e^{\lambda (1- w_1 \vee w_2) a_z(r_1,r_2)} - 1
 \ \sim \
 \lambda (1- w_1 \vee w_2) c_{\alpha,d} \bar F(|z|) |z|^d.
\]
With the help of the bound $|e^t - 1| \le (e-1) t$ for $t \in [0,1]$, we may use dominated convergence
to conclude that
\[
 q(|z|,r_1,r_2) \sim q_\infty(r_1,r_2) |z|^d \bar F(|z|) \quad \text{as $|z| \to \infty$},
\]
where
\begin{align*}
 q_\infty(r_1,r_2)
 &= \lambda c_{\alpha,d} \int_0^1 \int_0^1 (1-w_1 \vee w_2) e^{-\lambda b(r_1) (1-w_1)} e^{-\lambda b(r_2) (1-w_2)}
 dw_1 dw_2.
\end{align*}
Now by Proposition~\ref{thm1} it follows that
\[
 \kth(z) \sim c_1 \bar F(|z|) |z|^d \quad \text{as $|z| \to \infty$},
\]
where
\[
 c_1 = \lambda^2 |B_1|^2 \int_{\R+} \int_{\R_+} r_1^d r_2^d q_\infty(r_1,r_2) F(dr_1) F(dr_2).
\]
The constant $c_1$ is finite because $q_\infty(r_1,r_2) \le \lambda c_{\alpha,d}$ for all $r_1,r_2$. The fact that $c_1$ is strictly positive
is easily seen by inspecting the expression of $q_\infty(r_1,r_2)$.

Similarly, Proposition~\ref{lemma_pair_cor} shows that
\[
 \xith(z) \sim c_2 |z|^d \bar F(|z|),
\]
where
\[
 c_2 = \frac{\lambda^2}{\lambdath^2} \int_\Rp \int_\Rp q_\infty(r_1,r_2) F(dr_1) F(dr_2).
\]
The finiteness and strict positivity of $c_2$ follow by similar reasoning as for $c_1$.
\end{proof}

\section{Large grains retained}
\label{sec:large}

A thinning which favors large grains is obtained by letting the weight of each grain be equal to
its radius, so that $G_r(dw) = \delta_r(dw)$.

\begin{theorem}
\label{the:large}
Assume that the radius distribution $F$ follows a power law with tail exponent $\alpha > d$, so
that $1-F(r) = \ell(r) r^{-\alpha}$ for some slowly varying function $\ell$. Then the thinned
radius distribution decays according to
\[
 \Fbarth(r) \sim \frac{\lambda}{\lambdath} \ell(r) r^{-\alpha}
 \quad \text{as $r \to \infty$},
\]
the covariance function of the thinned grain cover according to
\[
 \kth(z) \sim \lambda c_{\alpha,d} (1-\pth)^2 \ell(|z|) |z|^{-(\alpha-d)}
 \quad \text{as $|z| \to \infty$},
\]
and the two-point correlation function of the thinned grain centers according to
\[
 \xith(z) \sim \lambda c_{\alpha,d} \ell(|z|) |z|^{-(\alpha-d)}
 \quad \text{as $|z| \to \infty$},
\]
where the constant $c_{\alpha,d}$ is given by~\eqref{eq:calphad}.
\end{theorem}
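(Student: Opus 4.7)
The plan is to follow the template of the proofs of Theorems~\ref{the:isolated} and~\ref{the:random}: derive explicit formulas for $h$, $h_2$, and $q$ under the large-retained weighting $G_r(dw) = \delta_r(dw)$, then feed them into the machinery of Section~\ref{sec:AsymptoticSecondOrder}. Since $G_r[w,\infty) = \ind(r \ge w)$, evaluating Propositions~\ref{the:RetentionProbability} and~\ref{the:H2} at $w = r$ (resp.\ $w_i = r_i$) yields
\[
 h(r) = \exp\Bigl( -\lambda \int_r^\infty |B_{r+s}|\,F(ds) \Bigr), \qquad
 h_2(|z|,r_1,r_2) = h(r_1)\,h(r_2)\,e^{\tau(|z|,r_1,r_2)}
\]
for $|z| > r_1+r_2$, with $\tau(|z|,r_1,r_2) = \lambda \int_{r_1\vee r_2}^\infty |B_{r+r_1}(o)\cap B_{r+r_2}(z)|\,F(dr)$. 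The structural feature that distinguishes this case from the isolated and random-retained ones is that $h(r) \to 1$ as $r \to \infty$: Karamata's theorem gives $\int_r^\infty |B_{r+s}|\,F(ds) \sim c\,r^d \bar F(r) \to 0$ since $\alpha > d$.

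Two of the three asymptotics then follow quickly. The thinned radius tail is immediate from dominated convergence (using $h \le 1$ with $h \to 1$), which gives $\int_r^\infty h(s)\,F(ds) \sim \bar F(r)$ and hence $\Fbarth(r) \sim (\lambda/\lambdath)\,\ell(r) r^{-\alpha}$. For the retention covariance, $q = h(r_1) h(r_2) (e^\tau - 1) \sim h(r_1) h(r_2) \tau$ since $\tau \to 0$, and the restriction $r \ge r_1 \vee r_2$ in $\tau$ is asymptotically negligible (the dominant contribution comes from $r$ of order $|z|$), so Lemma~\ref{lemma_average_intersection} yields $q_\infty(r_1,r_2) = \lambda c_{\alpha,d}\,h(r_1) h(r_2)$. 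Proposition~\ref{lemma_pair_cor} then applies directly (its hypotheses do not require $h \to 0$) and gives $\xith(z) \sim \lambda c_{\alpha,d}\,\ell(|z|) |z|^{-(\alpha-d)}$ after simplification via $\lambdath = \lambda \int h\,dF$.

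The main obstacle is the grain-cover covariance, because Proposition~\ref{thm1} is not directly applicable: its hypothesis $h \to 0$ fails here, so the single-grain term $S_1(z) = \lambda \int |B_r(o) \cap B_r(z)| h(r)\,F(dr)$ from Proposition~\ref{the:CovThinnedGrainCover} is no longer negligible. I would instead work with that proposition's exact decomposition $\kth = S_1 + \lambda^2 \iiint |B_{r_1}(o)\cap B_{r_2}(x)|\,q(|x-z|,r_1,r_2)\,dx\,F(dr_1) F(dr_2)$ and treat the two pieces separately. For $S_1$, the substitution $r = t|z|$ and dominated convergence (with $h(t|z|) \to 1$) give the leading order. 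For the $q$-integral the argument of Proposition~\ref{thm1} must be adapted: Lemma~\ref{lemma_tail_part}'s bound no longer decays (since $\sup_{r > |z|/6} h(r) \to 1$), so the near-diagonal part where $r_1 + r_2 \gtrsim |z|$---and where $q$ takes its negative value $-h(r_1) h(r_2)$ from the forbidden overlap of the reference grains---must be analyzed in its own right. Assembling the contributions and invoking $\pth = \lambda |B_1| \int r^d h(r)\,F(dr)$ is expected to produce the coefficient $\lambda c_{\alpha,d}(1-\pth)^2$, with the cross term $-2\pth$ emerging from this near-diagonal interaction.
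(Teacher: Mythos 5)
Your plan mirrors the paper's proof closely: the same formulas for $h$, $h_2$, and $\tau$; the same use of Karamata's theorem to get $h(r) \to 1$; the same routes to the radius tail (via Lemma~\ref{lemma:asympt_int}) and to $\xith$ (via Proposition~\ref{lemma_pair_cor}, correctly noting its hypotheses don't require $h \to 0$); and the same diagnosis that Proposition~\ref{thm1} fails for $\kth$ because $S_1$ stops being negligible and the near-diagonal region where $q = -h(r_1)h(r_2)$ now contributes the $-2\pth$ cross term. The one place your sketch stops short of the paper is precisely the part that requires real work: the paper carries out the near-diagonal contribution via the four-way region split into $I_0,\dots,I_3$, a further three-way subdivision of $I_3$, a change of variables to peel off the $J_1$ term $\sim \lambda^{-1}\pth c_{\alpha,d}|z|^d\bar F(|z|)$, and an $\epsilon$-perturbation argument to control the error pieces $J_2$, $I_{32}$, $I_{33}$ via Lemma~\ref{lemma_reg_var}; your proposal correctly predicts the $(1-\pth)^2$ outcome but leaves those estimates as ``expected'' rather than established.
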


\begin{proof}
Because the weight of each grain is equal to its radius, the weight-averaged retention probability
$h(r)$ is equal to $h(r,w)$ with $w$ taking on the value $r$. By Proposition
\ref{the:RetentionProbability}, the retention probability is given by
\[
 h(r) = \exp \left( - \lambda \int_{\R_+} | B_{r+s} | \, \I_{[r,\infty)}(s) \, F(ds) \right).
\]
Because the integrand above tends to zero as $r \to \infty$, and the integrand is bounded by the
$F(ds)$-integrable function $|B_{2s}|$, dominated convergence implies that $\lim_{r \to \infty}
h(r) = 1$. By Lemma \ref{lemma:asympt_int}, the tail of the thinned radius distribution
\eqref{eq:radiusth} satisfies
\[
 \Fbarth(r)
 = \frac{\lambda}{\lambdath} \int_r^\infty h(s) F(ds)
 \sim \frac{\lambda}{\lambdath} \int_r^\infty F(ds)
 = \frac{\lambda}{\lambdath} \bar F(r).
\]

To analyze the long-range behavior of $\kth(z)$ and $\xith(z)$, let us first investigate the
long-range behavior of the retention covariance function $q(|z|,r_1,r_2)$ defined
by~\eqref{eq:RetentionCovariance}. Using Proposition~\ref{the:H2}, we find that
\begin{equation}
 \label{eq:RetentionLarge}
 q(|z|,r_1,r_2) = h(r_1) h(r_2) \left( \ind(r_1+r_2 < |z|) e^{\tau(|z|,r_1,r_2)} - 1 \right),
\end{equation}
where
\begin{equation}
 \label{eq:TauLarge}
 \tau(|z|,r_1,r_2)
 = \lambda \int_{\R_+} \ind_{[r_1 \vee r_2, \infty)}(s) | B_{r_1+s}(o) \cap B_{r_2 + s}(z) | \, F(ds).
\end{equation}
When $|z| > 3(r_1+r_2)$, we may replace the region of integration above with the full positive
real line, so that with the help of Lemma \ref{lemma_average_intersection} we find that
\[
 \tau(|z|,r_1,r_2)
 \, = \, \lambda \int_{\R+} | B_{r_1+s}(o) \cap B_{r_2 + s}(z) | \, F(ds)
 \, \sim \, \lambda c_{\alpha,d} |z|^d \bar F(|z|),
\]
as $|z| \to \infty$. Because $e^t - 1 \sim t$ for small $t$, we conclude
using~\eqref{eq:RetentionLarge} that
\begin{equation}
 \label{eq:large_1}
 q(|z|,r_1,r_2) \sim q_\infty(r_1,r_2) |z|^d \bar F(|z|),
\end{equation}
where
\begin{equation}
 \label{eq:QInftyLarge}
 q_\infty(r_1,r_2) = \lambda c_{\alpha, d} h(r_1)h(r_2).
\end{equation}
The claim for the two-point correlation function $\xith(z)$ now follows by using
Proposition~\ref{lemma_pair_cor}, after noting that the constant in
Proposition~\ref{lemma_pair_cor} is
\[
 \frac{\lambda^2}{\lambdath^2} \iint q_\infty(r_1, r_2) F(dr_1) F(dr_2)
 = \lambda\lambdath^{-2} c_{\alpha, d} \left( \lambda \int h(r) F(dr)\right)^2
 =  \lambda c_{\alpha, d}.
\]

We will now move on to the part concerning the covariance function $\kth(z)$ of the thinned grain
cover. Note that because $h(r)$ does not vanish as $r \to \infty$, we cannot use
Proposition~\ref{thm1} to deduce the long-range behavior of $\kth(z)$. Instead, we will proceed by
directly analyzing the integral building blocks of $\kth(z)$ in high precision. Let us start by
rewriting \eqref{eq:cov} as
\[
 \kth(z) = \lambda I_0(z) + \lambda^2 (I_1(z) + I_2(z) + I_3(z)),
\]
where
\[
 I_0(z) = \int_\Rp |B_r(o) \cap B_r(z)| \, h(r) \, F(dr),
\]
\[
 I_j(z) = \iiint_{A_j^z} |B_{r_1}(o) \cap B_{r_2}(x)| q(|x - z|, r_1, r_2) \, dx F(dr_1) F(dr_2),
 \quad j=1,2,3,
\]
and
\begin{align*}
 A_1^z &= \{(x, r_1, r_2) : |x| < r_1+r_2, \ r_1+r_2 < |x-z|/2 \},\\
 A_2^z &= \{(x, r_1, r_2) : |x| < r_1+r_2, \ |x-z|/2 < r_1+r_2 < |x-z|\},\\
 A_3^z &= \{(x, r_1, r_2) : |x| < r_1+r_2, \ |x-z| < r_1+r_2 \}.
\end{align*}

The first term $ I_0(z) \lesssim c_{\alpha,d} |z|^d \bar F(|z|) $
by Lemma~\ref{lemma_average_intersection}.
Note that the integrand in $I_0(z)$ vanishes for $r \le |z|/2$ so that
\begin{align*}
 I_0(z) \ge \inf_{r \ge |z|/2} h(r) \int_\Rp |B_r(o) \cap B_r(z)| F(dr).
\end{align*}
Using Lemma~\ref{lemma_average_intersection} and
that $h(r) \to 1$ as $r \to \infty$ we conclude that
\begin{equation}
 \label{eq:cov_1}
 I_0(z) \sim c_{\alpha,d} |z|^d \bar F(|z|).
\end{equation}

Next, we will prove that
\begin{equation}
 \label{eq:cov_2}
 I_1(z) \sim \lambda^{-1} c_{\alpha,d} \pth^2 |z|^d \bar F(|z|).
\end{equation}
By Lemma \ref{lemma_main_part}, the function $(x,r_1,r_2) \mapsto \frac{q(|x-z|, r_1, r_2)}{|z|^d
\bar F(|z|)} \I_{A_1^z}(x,r_1,r_2)$ is positive and bounded by a constant which does not depend on
$z$. Because $|B_{r_1}(o) \cap B_{r_2}(x)|$ is integrable with respect to $dx F(dr_1) F(dr_2)$,
Lebesgue's dominated convergence theorem shows that
\[
 \lim_{z \to \infty} \frac{I_1(z)}{|z|^d \bar F(|z|)}
 = \iiint |B_{r_1}(o) \cap B_{r_2}(x)|
 \left( \lim_{z \to \infty} \frac{q(|x - z|, r_1, r_2)}{|z|^d \bar F(|z|)} \I_{A_1^z} \right)
 dx F(dr_1) F(dr_2).
\]
Using \eqref{eq:large_1} and the definition of $A_1^z$, the limit on the right equals
$q_\infty(r_1, r_2)$. Plugging in the expression~\eqref{eq:QInftyLarge} for $q_\infty(r_1,r_2)$
and recalling the formula~\eqref{eq:VolumeFraction} for the volume fraction of the thinned grain
cover $\pth$, we find that
\begin{align*}
 \lim_{|z| \to \infty} \frac{I_1(z)}{|z|^d \bar F(|z|)}
 &= \lambda c_{\alpha,d} \iiint |B_{r_1}(o) \cap B_{r_2}(x)| h(r_1) h(r_2) \, dx F(dr_1) F(dr_2)\\
 &= \lambda c_{\alpha,d} \iint |B_1|^2 r_1^d r_2^d h(r_1) h(r_2) \, F(dr_1) F(dr_2)\\
 &= \lambda^{-1} c_{\alpha,d} \pth^2,
\end{align*}
which proves the validity of~\eqref{eq:cov_2}.

Now we will prove that
\begin{equation}
 \label{eq:cov_I_2}
 \frac{I_2(z)}{|z|^d \bar F(|z|)} \to 0
 \quad \text{as $|z| \to \infty$}.
\end{equation}
First, using the bound $|B_{r_1+s}(o) \cap |B_{r_2+s}(x-z)| \le |B_{r_1+s}| \le |B_{2s}|$ for $s
\ge r_1 \vee r_2$, we find that the function $\tau$ defined in~\eqref{eq:TauLarge} is bounded by
\[
 \tau(|x-z|,r_1,r_2)
 \le \lambda |B_1| 2^d \int \ind_{[r_1 \vee r_2, \infty)}(s) s^d \, F(ds).
\]
Observe next that
\[
 |z| \le 3(r_1+r_2) \le 6 (r_1 \vee r_2)
\]
for all $(x,r_1,r_2) \in A_2^z$, so that $r_1 \vee r_2$ is large when $|z|$ is large. As a
consequence, we see by Lemma~\ref{lemma_reg_var} and Lemma~\ref{the:UniformConvergence} that
for all $(x,r_1,r_2) \in A_2^z$ and all large enough $z$,
\begin{align*}
 \tau(|x-z|, r_1, r_2)
 &\le 2 \lambda |B_1| 2^d (r_1 \vee r_2)^d \bar F(r_1 \vee r_2)\\
 &\le 4 \lambda |B_1| 2^d (|z|/6)^d \bar F(|z|/6).
\end{align*}
Because $e^t - 1 \le (e-1)t$ for $t \in [0,1]$, formula~\eqref{eq:RetentionLarge} combined with
the above inequality shows that for all $(x,r_1,r_2) \in A_2^z$ and all large enough $z$,
\[
 0 \le q(|x-z|, r_1, r_2)
 \le c_1 |z|^d \bar F(|z|/6),
\]
where $c_1 = 4 (e-1) \lambda |B_1| 3^{-d}$. Therefore,
\[
 0
 \le I_2(z)
 \le c_1 |z|^d \bar F(|z|/6) \iiint_{A_2^z} |B_{r_1}(o) \cap B_{r_2}(x)| \, dx F(dr_1) F(dr_2).
\]
Note that $A_2^z \subset A_{21}^z \cup A_{22}^z$ where $A_{2i}^z = \{(x,r_1,r_2): r_i \ge |z|/6\}$,
$i=1,2$. By symmetry of the integrand with respect to $r_1$ and $r_2$,
\begin{align*}
 0 \le I_2(z)
 &\le 2 c_1 |z|^d \bar F(|z|/6) \int_\Rp \int_{|z|/6}^{\infty} \int_{\R^d} |B_{r_1}(o) \cap B_{r_2}(x)| \, dx F(dr_1) F(dr_2)\\
 &= 2 c_1 |z|^d \bar F(|z|/6) |B_1|^2 \left( \int r^d \, F(dr) \right)
 \left(  \int_{|z|/6}^{\infty} r^d \, F(dr) \right),
\end{align*}
which shows the validity of~\eqref{eq:cov_I_2}.

It remains to be shown that
\begin{equation}
 \label{eq:cov_3}
 I_3(z) \sim -2 \frac{\pth}{\lambda} c_{\alpha,d} |z|^d \bar F(|z|).
\end{equation}
To do that, we first fix a small $\epsilon \in (0,1/4)$. Note that by
formula~\eqref{eq:RetentionLarge}, the retention covariance function equals $q(|x-z|,r_1,r_2) =
-h(r_1)h(r_2)$ for $(x,r_1,r_2) \in A_3^z$.
Note also that, for fixed $r_1$ and $r_2$ the $x$-slice of $A_3^z$ is
\[
\{ x : (x, r_1, r_2) \in A_3^z \} = B_{r_1+r_2}(z) \cap B_{r_1+r_2}(o).
\]
Because $|B_{r_1}(o) \cap B_{r_2}(x)|$ vanishes for $x$ outside $B_{r_1+r_2}(o)$, we may represent
$I_3(z)$ according to
\begin{align*}
 I_3(z)
 &= - \iiint_{A_3^z} |B_{r_1}(o) \cap B_{r_2}(x)| \, h(r_1)h(r_2) \, dx  F(dr_1) F(dr_2)\\
 &= - \iint_{C_3^z} \int_{B_{r_1+r_2}(z)} |B_{r_1}(o) \cap B_{r_2}(x)| \, dx \, h(r_1)h(r_2) \, F(dr_1) F(dr_2)\\
\end{align*}
where
\[
 C_3^z = \{ (r_1, r_2) : |z| \le 2(r_1+r_2) \}.
\]
Next we split $I_3(z)$ into three parts
\[
I_3(z)= - (I_{31}(z) + I_{32}(z) + I_{33}(z)),
\]
where
\[
I_{3j}(z) = \iint_{A_{3j}^z} \int_{B_{r_1+r_2}(z)} |B_{r_1}(o) \cap B_{r_2}(x)| \, dx \, h(r_1)h(r_2) \, F(dr_1) F(dr_2)
\]
for $j = 1, 2, 3$ and
\begin{align*}
 C_{31}^z    &= C_{311}^z \cup C_{312}^z \\
 C_{311}^z   &= \{(r_1, r_2) : 0 \le r_1 \le \epsilon |z|, \ |z|/2 \le r_2 \}\\
 C_{312}^z   &= \{(r_1, r_2) : 0 \le r_2 \le \epsilon |z|, \ |z|/2 \le r_1 \}\\
 C_{32}^z    &= C_{331}^z \cup C_{332}^z\\
 C_{321}^z   &= \{ (r_1, r_2) : 0 \le r_1 \le \epsilon |z|, \ |z|/2 - r_1 \le r_2 \le |z|/2 \}\\
 C_{322}^z   &= \{ (r_1, r_2) : 0 \le r_2 \le \epsilon |z|, \ |z|/2 - r_2 \le r_1 \le |z|/2 \}\\
 C_{33}^z    &= [\epsilon |z|, \infty)^2 \cap C_3^z.
\end{align*}

A change of variables shows that
\[
 \int_{B_{r_1+r_2}(z)} |B_{r_1}(o) \cap B_{r_2}(x)| \, dx
 = \int_{B_{r_1}(o)} |B_{r_2}(z) \cap B_{r_1+r_2}(x) | \, dx,
\]
so that we can express the integral $I_{31}(z)$ more conveniently as
\[
 I_{31}(z)
 = \iint_{C_{31}^z} \int_{B_{r_1}(o)} |B_{r_2}(z) \cap B_{r_1+r_2}(x) | \, dx \, h(r_1)h(r_2) \, F(dr_1)
 F(dr_2).
\]
By symmetry we can write $I_{31}(z) = 2 I_{311}(z)$, where $I_{311}(z)$ is a modification of
$I_{31}(z)$ with the region of integration $C_{31}^z$ replaced by $C_{311}^z$. To analyze the
long-range behavior of $I_{311}(z)$, let us split it according to $I_{311}(z) = J_{1}(z) +
J_{2}(z)$, where
\[
 J_1(z)
 = \int_0^{\epsilon |z|} \int_{|z|/2}^\infty \int_{B_{r_1}(o)}
 |B_{r_2}(z) \cap B_{r_2}(o)| \, dx \, h(r_2) F(dr_2) h(r_1) F(dr_1),
\]
and where $J_2(z) = I_{311}(z) - J_1(z)$. Because the integrand of $J_1(z)$ does not depend on
$x$, we can rewrite the integral as
\[
 J_1(z)
 = \left( \int_0^{\epsilon |z|} |B_r| \, h(r) F(dr) \right)
 \left( \int_{|z|/2}^\infty|B_{r}(z) \cap B_{r}(o)| \, h(r) F(dr) \right).
\]
The first integral on the right satisfies
\[
 \int_{0}^{|z|\epsilon} |B_1| r^d h(r) F(dr) \sim \int_\Rp |B_1| r^d h(r) F(dr)
 = \lambda^{-1} \pth,
\]
where $\pth$ is the volume fraction of the thinned grain cover given by~\eqref{eq:VolumeFraction}.
Notice that, because the intersection in the second integral vanishes for $r < |z|/2$,
we can apply \eqref{eq:cov_1} to conclude that
\begin{equation*}
 J_1(z) \sim \lambda^{-1} \pth c_{\alpha,d} |z|^d \bar F(|z|).
\end{equation*}

The rest of the proof constitutes of showing that the remaining three parts of $I_3(z)$ are
negligible. We start by showing that $J_2 \ge 0$ and
\[
 \limsup_{|z| \to \infty} \frac{J_2(z)}{|z|^d \bar F(|z|)}
 \le |B_1|^2 \left((1+2\epsilon)^d - 1\right) \left( \int r^d F(dr) \right)
 \frac{\alpha}{\alpha-d} 2^{\alpha-d}.
\]
First we need a bound for the difference of the intersections in $J_2(z)$. Fix $x \in
B_{r_1}(o)$ and $(r_1,r_2) \in C_{311}^z$. Because $|x| \le r_1$, we have $B_{r_2}(o) \subset
B_{r_1+r_2}(x)$, which implies that the integrand in $J_2(z)$ is bounded by
\begin{align*}
 0
 &\le |B_{r_2}(z) \cap B_{r_1+r_2}(x)| - |B_{r_2}(z) \cap B_{r_2}(o)|\\
 &=   |B_{r_2}(z) \cap (B_{r_1+r_2}(x) \setminus  B_{r_2}(o))|\\
 &\le |B_{r_1+r_2}(x)| - | B_{r_2}(o)|\\
 &=   |B_1| ((1+r_1/r_2)^d - 1) r_2^d \\
 &\le |B_1| ((1 + 2\epsilon)^d - 1) r_2^d,
\end{align*}
where the last inequality is due to $r_1 \le \epsilon |z|$ and $|z|/2 \le r_2$. This bound and
$h(r) \le 1$ now imply that
\begin{align*}
 0 \le J_2(z)
 &\le \int_0^{\epsilon |z|} \int_{|z|/2}^\infty \int_{B_{r_1}(o)}
 |B_1| ((1 + 2\epsilon)^d - 1 ) r_2^d \, dx h(r_2) F(dr_2) h(r_1) F(dr_1) \\
 &\le |B_1|^2 ((1 + 2\epsilon)^d - 1 ) \left( \int_0^{\epsilon |z|} r^d F(dr) \right) \ \left( \int_{|z|/2}^\infty r^d  F(dr) \right).
\end{align*}
Now using Lemma~\ref{lemma_reg_var} proves the claim.

We will now show that $I_{32} \ge 0$ and
\[
 \limsup_{|z| \to \infty} \frac{I_{32}(z)}{|z|^d \bar F(|z|)} \le
 2\left( \int r^d F(dr) \right)|B_1|^2
 \left[(1-2\epsilon)^{d-\alpha} - 1 \right]
 \frac{\alpha}{\alpha - d} 2^{\alpha - d}.
\]
By symmetry $I_{32}(z) = 2 I_{321}(z)$, where
\[
 I_{321}(z) = \iint_{C_{321}^z} \int_{B_{r_1+r_2}(z)} |B_{r_1}(o) \cap B_{r_2}(x)| \, dx \,
 h(r_1)h(r_2) \, F(dr_1) F(dr_2).
\]
Note that $C_{321}^z \subset [0, \epsilon |z|] \times [|z|/2(1-2\epsilon), |z|/2]$.
Also approximating $B_{r_1 + r_2}(z)$ by $\R^d$ and recalling that $h(r) \le 1$ we have
\begin{align*}
 I_{321}(z)
 \le&
 \int_{0}^{|z|\epsilon}
 \int_{(|z|/2)(1-2\epsilon)}^{|z|/2}
 |B_1|^2 r_1^d r_2^d  F(dr_2) F(dr_1)\\
 =&|B_1|^2 \left( \int_{0}^{|z|\epsilon} r^d F(dr) \right)
 \left( \int_{(|z|/2)(1-2\epsilon)}^{\infty} r^d
 F(dr) - \int_{|z|/2}^{\infty} r^d F(dr)
 \right).
\end{align*}
Now using Lemma~\ref{lemma_reg_var} implies the claim.

For the last part $I_{33}(z)$ we have first a simple bound
\begin{align*}
 I_{33}(z)
 &\le \int_{\epsilon |z|}^\infty \int_{\epsilon |z|}^\infty \int_{B_{r_1+r_2}(z)} |B_{r_1}(o) \cap B_{r_2}(x)|
 \, dx \, h(r_1)h(r_2) \, F(dr_1) F(dr_2)\\
 &\le \int_{\epsilon |z|}^\infty \int_{\epsilon |z|}^\infty |B_1|^2 \, r_1^d r_2^d \, F(dr_1) F(dr_2)\\
 &= |B_1|^2 \left(\int_{\epsilon |z|}^\infty r^d F(dr)\right)^2.
\end{align*}
Using Lemma~\ref{lemma_reg_var} this bound implies that
\[
 \frac{I_{33}(z)}{|z|^d \bar F(z)} \to 0 \quad \text{as } |z| \to \infty.
\]

Adding together all the parts of $I_{3}(z)$ we have
\[\limsup_{|z| \to \infty} \frac{I_3(z)}{|z|^d \bar F(|z|)} = -2 \frac{\pth}{\lambda}c_{\alpha, d}\]
and
\[ \liminf_{|z| \to \infty} \frac{I_3(z)}{|z|^d \bar F(|z|)} \ge -2 \frac{\pth}{\lambda}c_{\alpha, d} - \delta(\epsilon), \]
where
\[ \delta(\epsilon) = 2|B_1|^2 \left( \int r^d F(dr) \right)
\frac{\alpha}{\alpha-d} 2^{\alpha-d}
\left(
\left((1+2\epsilon)^d - 1\right)
+
\left((1-2\epsilon)^{d-\alpha} - 1 \right)
\right).
\]
Letting $\epsilon \to 0$ shows the validity of \eqref{eq:cov_3} and concludes the proof.
\end{proof}

\section{Small grains retained}
\label{sec:small}

In this section we study a thinning which favors small grains. This thinning is obtained by
setting the weight of each grain to the inverse of its radius, so that $G_r(dw) =
\delta_{1/r}(dw)$. The following theorem shows that the thinned radius distribution and the key
second-order characteristics decay rapidly to zero, regardless of the tail behavior of the
original radius distribution $F$. Note that here, unlike in
Theorems~\ref{the:isolated}--\ref{the:large}, there is no need to assume anything on the shape of
the radius distribution $F$.

\begin{theorem}
\label{the:small}
Assume that the radius distribution $F$ satisfies $\int r^d F(dr) < \infty$. Then the thinned
radius distribution is bounded by
\[
  \Fbarth(r) \le \frac{\lambda}{\lambdath} e^{-\lambda |B_1| \frac{1}{2} r^d},
\]
the covariance function of the thinned grain cover by
\[
 |\kth(z)| \le e^{- \lambda |B_1| c |z|^d },
\]
and the two-point correlation function of the thinned grain centers by
\[
 |\xith(z)| \le e^{- \lambda |B_1| c |z|^d }
\]
for all large values of $r$ and $z$, where $c \in (0,\infty)$.
\end{theorem}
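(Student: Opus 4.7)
The key structural feature of the small-retained thinning, which distinguishes it from the three previous models, is that a grain's retention is governed entirely by the \emph{smaller} grains in its neighborhood. This both makes $h(r)$ exponentially small in $r^d$ and, more importantly, makes the pair retention covariance $q$ vanish identically once the two reference grains are sufficiently separated. The plan consists of three steps.

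\emph{Step 1: bound on $h$ and on $\Fbarth$.} With $w = 1/r$ we have $G_s[1/r,\infty) = \mathbf{1}(s \le r)$, so Proposition~\ref{the:RetentionProbability} gives
\[
 h(r) \ = \ \exp\!\left(-\lambda|B_1|\int_0^r (r+s)^d\,F(ds)\right).
\]
Bounding the integrand from below by $r^d$ and using $F(r)\ge \tfrac12$ for large $r$ yields $h(r)\le \exp(-\tfrac12\lambda|B_1|r^d)$. Since $h$ is decreasing, \eqref{eq:radiusth} gives $\Fbarth(r)\le(\lambda/\lambdath)h(r)$, which establishes the first bound.

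\emph{Step 2: vanishing of $q$ at large separations.} With $w_i = 1/r_i$, the function $\tau$ in Proposition~\ref{the:H2} reduces to $\lambda\int_0^{r_1\wedge r_2}|B_{r_1+s}(x_1)\cap B_{r_2+s}(x_2)|\,F(ds)$. The integrand is supported on $\{|x_1-x_2|<r_1+r_2+2s\}$, and since $s \le r_1\wedge r_2 \le (r_1+r_2)/2$, it vanishes on the entire integration range whenever $|x_1-x_2|\ge 2(r_1+r_2)$. Hence $q(u,r_1,r_2) = h(r_1)h(r_2)(e^{\tau}-1) = 0$ for every $u \ge 2(r_1+r_2)$.

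\emph{Step 3: second-order bounds.} For $\kth(z)$, the first summand in Proposition~\ref{the:CovThinnedGrainCover} is supported on radii $r\ge|z|/2$, so Step~1 bounds it by a constant times $\exp(-c|z|^d)$. In the double-integral summand, the support condition $|x|\le r_1+r_2$ implies $|x-z|\ge |z|-(r_1+r_2)$. On $\{r_1+r_2<|z|/4\}$ this gives $|x-z|\ge 3|z|/4\ge 2(r_1+r_2)$, so $q=0$ by Step~2; on the complement, $\max(r_1,r_2)\ge|z|/8$, hence Lemma~\ref{the:H2Bound} and Step~1 give $|q|\le \exp(-c|z|^d)$, while the remaining integral of $|B_{r_1}(o)\cap B_{r_2}(x)|$ against $dx\,F(dr_1)F(dr_2)$ is a finite constant. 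For $\xith(z)$, Proposition~\ref{the:TwoPointCorrelation} reduces the problem to $\iint q(|z|,r_1,r_2)\,F(dr_1)F(dr_2)$; Step~2 makes the integrand vanish on $\{r_1+r_2\le|z|/2\}$, and on the complement $\max(r_1,r_2)>|z|/4$ delivers the desired exponential bound via $|q|\le h(\max(r_1,r_2))$. The only thing to watch is that the case splits in Step~3 are chosen so that Step~2's vanishing region absorbs the small-radius regime; once this is arranged, no delicate asymptotic analysis is required, which is in sharp contrast with Theorems~\ref{the:isolated}--\ref{the:large}.
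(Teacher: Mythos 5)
Your proof is correct and follows essentially the same route as the paper's: the exponential bound on $h$, the observation that $q(|x-z|,r_1,r_2)$ vanishes identically once $|x-z|\ge 2(r_1+r_2)$, and a case-split that funnels the small-radius regime into that vanishing region and controls the complementary large-radius regime by $|q|\le h(r_1)\wedge h(r_2)$ via Lemma~\ref{the:H2Bound}. The paper packages your Step~3 bookkeeping into the pre-established Lemma~\ref{lemma_tail_part} (with threshold $|z|/6$ rather than your $|z|/4$, and retaining an extra tail factor $\int_{|z|/6}^\infty r^d\,F(dr)$ that you discard), but this is a cosmetic difference; both deliver the claimed exponential bounds after absorbing the multiplicative prefactors by slightly reducing the constant $c$ for large $|z|$.
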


\begin{proof}
Now the weight-averaged retention probability $h(r)$ is equal to the retention probability of a
reference grain with radius $r$ and weight $1/r$. Using Proposition~\ref{the:RetentionProbability}
we find that
\[
 h(r) = \exp \left( - \lambda \int_0^r |B_{r+s}(o)| F (ds) \right).
\]
From this expression we see that $h$ decreases monotonically to zero as $r$ grows, and that $h(r)
\le \exp( - \frac12 \lambda |B_1| r^d)$ for all large enough $r$ so that $F(r) \ge 1/2$.
Proposition~\ref{the:H2} further shows that the weight-averaged pair retention probability equals
\begin{equation}
 \label{eq:SmallGrainH2}
 h_2(|z|, r_1, r_2) = h(r_1) h(r_2)
 \exp \left( \lambda \int \ind_{[0,r_1 \wedge r_2]}(r) | B_{r_1+r}(o) \cap B_{r_2+r}(z) | F(dr) \right)
\end{equation}
for $|z| > r_1 + r_2$.

To analyze the covariance function of the thinned grain cover, recall that
\begin{multline}
 \label{eq:SmallGrainCov}
 \kth(z) = \lambda \int |B_r(o) \cap B_r(z)| h(r) F(dr) \\
 + \lambda^2 \iiint |B_{r_1}(o) \cap B_{r_2}(x)| q(|x-z|,r_1,r_2) \, dx F(dr_1) F(dr_2),
\end{multline}
where $q(u,r_1,r_2) = h_2(u,r_1,r_2) - h(r_1) h(r_2)$. Because $|B_r(o) \cap B_r(z)| \le |B_1| r^d
\ind(r > |z|/2)$, the first term on the right side of~\eqref{eq:SmallGrainCov} is bounded from
above by
\[
 \lambda |B_1| \left( \int_{|z|/2}^\infty r^d F(dr) \right) \sup_{r > |z|/2} h(r).
\]
Note that $q(|x-z|,r_1,r_2)$ vanishes for $|x-z| > 2(r_1+r_2)$, because the integral
in~\eqref{eq:SmallGrainH2} vanishes for $|z| \ge 2(r_1+r_2)$. This is why the integration in
second term in~\eqref{eq:SmallGrainCov} can be restricted to the set $A(z) = \{(x,r_1,r_2): |x-z|
\le 2(r_1+r_2)\}$. Now using Lemma~\ref{lemma_tail_part}, the absolute value of
second term in~\eqref{eq:SmallGrainCov} is bounded from above by
\[
 2 \lambda^2 |B_1|^2 \left( \int_{\R+} r^d F(dr) \right)
 \left( \int_{|z|/6}^\infty r^d F(dr) \right) \sup_{r>|z|/6} h(r).
\]
As consequence, we find that
\[
 |\kth(z)| \le \left( \lambda m_1 + 2 \lambda^2 m_1^2 \right) \sup_{r>|z|/6} h(r),
\]
where $m_1 = |B_1| \int r^d F(dr)$ is the mean volume of a grain. Therefore,
\[
 |\kth(z)|
 \le \left( \lambda m_1 + 2 \lambda^2 m_1^2 \right) e^{-\frac12 \lambda |B_1| (|z|/6)^d}
\]
for all large enough $z$ such that $F(|z|) \ge 1/2$. A similar analysis can be carried out for the
two-point correlation function.

Typical radius has tail probabilities \eqref{eq:radiusth}
\[
 \Fbarth(r)
 =   \frac{\lambda}{\lambdath} \int_r^\infty h(s) \, F(ds)
 \le \frac{\lambda}{\lambdath} h(r) \bar F(r)
 \le \frac{\lambda}{\lambdath} e^{- \frac{1}{2} \lambda |B_1| r^d}
\]
for all large enough $r$ so that $F(r) \ge 1/2$.
\end{proof}

\section{Conclusions and future work}
\label{sec:Conclusions}

Boolean models consisting of randomly sized spheres in $\R^d$ are long-range dependent if the
sphere radii follow a power-law distribution with tail exponent $\alpha \in (d,2d)$. We studied
second-order statistical properties of four hard-core germ--grain models which are obtained from
such Boolean models using a natural weight-based thinning mechanism. We found that a thinning
which favors large grains preserves the power-law covariance decay of the proposed Boolean model,
whereas a thinning which favors small grains does not. The power-law covariance decay is also
preserved under the thinning where only isolated grains are retained (Matérn type I), and the
thinning where retention is determined by independent weights (Matérn type II). The germ--grain
model obtained by the Matérn type I thinning is an interesting example of a homogeneous
hard-sphere model where typical spheres have exponentially small sizes but the covariance function
decays slowly according to a power law.

To keep the notation simple and the paper easy to read, the analysis carried out in this article
was restricted to spherical grains. However, we believe that this assumption can be easily relaxed
to some extent following standard techniques of stochastic geometry. Another interesting open
problem is to investigate how thinnings affect covariance decay properties in the light-tailed
setting where the grain size distribution in the proposed Boolean model is assumed to decay
rapidly.

\appendix

\section{Regular variation}
\label{sec:RegularVariation}

A measurable function $f: \R_+ \to \R$ is called \emph{regularly varying} with exponent $\gamma
\in \R$ if it is positive for all large enough input values and for all $a > 0$,
\[
 \frac{f(at)}{f(t)} \to a^\gamma
\]
as $t \to \infty$. A regularly varying function with exponent zero is called \emph{slowly
varying}. For a good overview on the theory of regular variation, see for example
\cite{Bingham_Goldie_Teugels_1987}. In this section we will summarize some key properties of
regularly varying functions which are needed in the analysis. The first one is a consequence of
the Karamata's theorem \cite{Bingham_Goldie_Teugels_1987}.

\begin{lemma}
\label{lemma_reg_var}
Let $F$ be distribution function on $\R_+$ with a regularly varying tail of
exponent $\alpha > p$. Then for any constant $a > 0$,
\[
 \int_{a x}^\infty r^p F(dr)
 \sim \left(\frac{\alpha}{\alpha - p}\right) a^{-(\alpha-p)} \bar F (x) x^p
 \quad \text{as $x \to \infty$}.
\]
\end{lemma}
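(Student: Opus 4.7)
The plan is to derive the asymptotic by an integration by parts that converts the Stieltjes integral against $F$ into a tail integral of $\bar F$, after which the result follows from Karamata's theorem. Writing $\bar F(r) = \ell(r) r^{-\alpha}$ for some slowly varying $\ell$, I would first integrate by parts to obtain
\[
 \int_{ax}^\infty r^p \, F(dr)
 = (ax)^p \bar F(ax) + p \int_{ax}^\infty r^{p-1} \bar F(r) \, dr.
\]
The boundary term at infinity vanishes because $r^p \bar F(r) = \ell(r) r^{p-\alpha} \to 0$ under the standing assumption $\alpha > p$, so no extra cancellation is required.

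Next, since $r \mapsto r^{p-1}\bar F(r)$ is regularly varying at infinity with exponent $p - 1 - \alpha < -1$, Karamata's theorem for tail integrals of regularly varying functions gives
\[
 \int_t^\infty r^{p-1} \bar F(r) \, dr \sim \frac{t^p \bar F(t)}{\alpha - p}
 \quad \text{as } t \to \infty.
\]
Specializing to $t = ax$ and combining with the boundary term yields
\[
 \int_{ax}^\infty r^p \, F(dr) \sim \left(1 + \frac{p}{\alpha-p}\right)(ax)^p \bar F(ax)
 = \frac{\alpha}{\alpha-p} (ax)^p \bar F(ax).
\]

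The final step uses the regular variation of $\bar F$ itself with exponent $-\alpha$, namely $\bar F(ax) \sim a^{-\alpha} \bar F(x)$ as $x \to \infty$. Substituting this into the previous line delivers the claimed form $\frac{\alpha}{\alpha-p} a^{-(\alpha-p)} x^p \bar F(x)$. No real obstacle arises in this argument: once Karamata is invoked the rest is algebraic simplification, and the only point that needs checking is the vanishing of the boundary term at infinity, which is immediate from $\alpha > p$.
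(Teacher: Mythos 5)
Your proof is correct, and it follows the route the paper itself indicates: the paper states the lemma with no written proof, simply citing Karamata's theorem, and your integration by parts followed by Karamata for $\int_t^\infty r^{p-1}\bar F(r)\,dr$ is the standard way to fill in those details. One small remark: the paper applies this lemma with negative exponents as well (e.g.\ $p=-d$ in the proof of the random-retained case), so it is cleaner to phrase the final combination as a limit of the ratio $\int_{ax}^\infty r^p F(dr) / \bigl((ax)^p \bar F(ax)\bigr) \to 1 + p/(\alpha-p) = \alpha/(\alpha-p)$ rather than as a sum of asymptotic equivalences, since for $p<0$ the two terms have opposite signs; this rephrasing changes nothing substantive because $\alpha/(\alpha-p)>0$ whenever $\alpha>p$ (and $\alpha>0$).
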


\begin{lemma}
\label{the:SlowVariation}
Assume that $\ell$ is slowly varying. Then for any $z_0 \in \R^d$,
\[
 \ell( |z-z_0| ) \sim \ell( |z| )
 \quad \text{as $|z| \to \infty$}.
\]
\end{lemma}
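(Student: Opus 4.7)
The plan is to reduce the claim to the uniform convergence theorem for slowly varying functions, which states that if $\ell$ is slowly varying, then $\ell(tx)/\ell(t) \to 1$ as $t \to \infty$ uniformly for $x$ in any compact subset of $(0,\infty)$ (see e.g.\ Bingham--Goldie--Teugels, Theorem 1.2.1). Since we are comparing $\ell(|z-z_0|)$ with $\ell(|z|)$ and $|z-z_0|/|z|$ tends to $1$ as $|z|\to\infty$, this uniform convergence is exactly the right tool.

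First I would set $t = |z|$ and $a(z) = |z-z_0|/|z|$, so that $\ell(|z-z_0|) = \ell(t \, a(z))$. By the reverse triangle inequality,
\[
 \bigl| |z - z_0| - |z| \bigr| \le |z_0|,
\]
and dividing by $|z|$ gives $|a(z) - 1| \le |z_0|/|z| \to 0$ as $|z|\to\infty$. In particular, for any fixed $\delta \in (0,1)$, there exists $R$ such that $a(z) \in [1-\delta, 1+\delta]$ whenever $|z| \ge R$.

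Next I would invoke the uniform convergence theorem on the compact set $K = [1-\delta, 1+\delta] \subset (0,\infty)$: given $\varepsilon > 0$, there exists $T$ such that
\[
 \sup_{x \in K} \left| \frac{\ell(t x)}{\ell(t)} - 1 \right| \le \varepsilon
 \qquad \text{for all } t \ge T.
\]
Taking $t = |z|$ and $x = a(z) \in K$ (valid for $|z|$ sufficiently large), we obtain
\[
 \left| \frac{\ell(|z-z_0|)}{\ell(|z|)} - 1 \right| \le \varepsilon
\]
for all $|z|$ large enough, which gives the claimed asymptotic equivalence.

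The only real subtlety is that one cannot apply the pointwise definition of slow variation directly, because the ratio $|z-z_0|/|z|$ varies with $z$ rather than being a fixed constant. The main (but standard) obstacle is therefore recognizing that the uniform convergence theorem is needed to handle this $z$-dependent scaling; once that is in hand, the proof is a short two-line estimate.
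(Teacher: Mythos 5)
Your proof is correct and matches the paper's argument essentially line for line: both write $|z-z_0| = a_z|z|$ with $a_z \to 1$, and both invoke the local uniform convergence theorem for slowly varying functions (Bingham--Goldie--Teugels, Thm.~1.2.1) on a compact neighborhood of $1$ to conclude. The only cosmetic difference is that the paper fixes the interval $[1/2,3/2]$ outright rather than using a general $\delta$-neighborhood.
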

\begin{proof}
Fix $z_0 \in \R^d$, and write $|z-z_0| = a_z |z|$, where $a_z = |z-z_0|/|z|$. Because $a_z \to 1$
as $|z| \to \infty$, we can fix $m$ such that $a_z \in [1/2,3/2]$ for $|z| > m$. Now, for any $z
\in \R^d$ such that $|z| > m$, it follows that
\[
 \left| \frac{\ell(|z-z_0|)}{\ell(|z|)} - 1 \right|
 =   \left| \frac{\ell(a_z |z|)}{\ell(|z|)} - 1 \right|
 \le \sup_{a \in [1/2,3/2]} \left| \frac{\ell(a |z|)}{\ell(|z|)} - 1 \right|.
\]
The right side above tends to zero as $|z| \to \infty$ because $\ell(a|z|)/\ell(|z|) \to 1$
locally uniformly with respect to $a$ \cite[Thm.~1.2.1]{Bingham_Goldie_Teugels_1987}.
\end{proof}

\begin{lemma}
\label{the:UniformConvergence}
Assume that $f$ is regularly varying with exponent $-\gamma < 0$. Then there exists a constant $u
> 0$ such that $f(y) \le 2f(x)$ for all $y \ge x \ge u$.
\end{lemma}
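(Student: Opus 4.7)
The plan is to invoke Potter's bounds, which are a standard consequence of the uniform convergence theorem for regularly varying functions (\cite[Thm.~1.5.6]{Bingham_Goldie_Teugels_1987}).

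First I would write $f(x) = x^{-\gamma} L(x)$ where $L$ is slowly varying, which is possible by the defining representation of a regularly varying function. This reduces the problem to controlling the slowly varying factor $L$.

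Next I would apply Potter's bound for slowly varying functions: for any $\delta > 0$ and $\epsilon > 0$, there exists $u > 0$ such that
\[
 \frac{L(y)}{L(x)} \le (1+\epsilon) \max\!\left( (y/x)^\delta, (y/x)^{-\delta} \right)
 \quad \text{for all } x, y \ge u.
\]
Combined with the deterministic factor, this yields, for $y \ge x \ge u$,
\[
 \frac{f(y)}{f(x)} = \left(\frac{y}{x}\right)^{-\gamma} \frac{L(y)}{L(x)}
 \le (1+\epsilon) \left(\frac{y}{x}\right)^{-\gamma+\delta},
\]
since $y/x \ge 1$ makes $(y/x)^\delta$ the active branch in the maximum.

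Finally I would fix $\delta \in (0, \gamma)$ so that the exponent $-\gamma+\delta$ is strictly negative; then $(y/x)^{-\gamma+\delta} \le 1$ for $y \ge x$. Taking $\epsilon = 1$ gives $f(y)/f(x) \le 2$ for all $y \ge x \ge u$, which is the claim. There is no real obstacle here: once Potter's bound is invoked with an exponent smaller than $\gamma$, negativity of the overall exponent forces the ratio to stay bounded by $1+\epsilon$.
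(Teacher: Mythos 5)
Your proof is correct, but it takes a genuinely different route from the paper's. The paper invokes the uniform convergence theorem for regularly varying functions (\cite[Thm.~1.5.2]{Bingham_Goldie_Teugels_1987}): since the index $-\gamma$ is negative, $f(\lambda x)/f(x) \to \lambda^{-\gamma}$ uniformly over $\lambda \in [1,\infty)$ as $x \to \infty$, and $\lambda^{-\gamma} \le 1$ there, so one can pick $u$ with $f(\lambda x) \le 2 f(x)$ for all $\lambda \ge 1$, $x \ge u$, and then set $\lambda = y/x$. You instead factor $f(x) = x^{-\gamma} L(x)$ and apply Potter's bound (\cite[Thm.~1.5.6]{Bingham_Goldie_Teugels_1987}) to the slowly varying factor $L$, choosing $\delta < \gamma$ so that the combined exponent $-\gamma + \delta$ is still negative. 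Potter's theorem is itself a corollary of the uniform convergence theorem, so in a sense you are using heavier machinery, but your route is arguably more robust in practice: it avoids having to remember the case distinction in the uniform convergence theorem about on which $\lambda$-range uniformity holds for a given sign of the index, and the same template works verbatim for any strictly negative index once $\delta$ is taken small enough. Both arguments are clean and both rely only on standard facts from BGT; the paper's is slightly shorter, yours is slightly more self-explanatory about why the sign of $\gamma$ matters.
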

\begin{proof}
By the uniform convergence of regularly varying functions
\cite[Thm.~1.5.2]{Bingham_Goldie_Teugels_1987}, $f(\lambda x)/f(x) \to \lambda^{-\gamma}$
uniformly for $\lambda \ge 1$ as $x \to \infty$. This implies that we can find $u > 0$ such that
$f(\lambda x) \le 2f(x)$ for all $x \ge u$ and all $\lambda \ge 1$. Now because $y \ge x$, we have
\[
 f(y) \le \sup_{\lambda \ge 1}f(\lambda x) \le 2f(x).
\]
\end{proof}

\begin{lemma}
\label{lemma:asympt_int} Let $F$ a probability measure on $\R_+$, and let $f$ and $g$ be bounded
positive functions on $\Rp$ such that $f(r) \sim g(r)$ as $r \to \infty$. Then,
\[
 \int_r^\infty f(s) F(ds) \sim \int_r^\infty g(s) F(ds)
 \quad \text{as $r \to \infty$}.
\]
\begin{proof}
\begin{align*}
\left| 1 - \frac{\int_r^\infty f(s) F(ds)} {\int_r^\infty g(s) F(ds)} \right| = \left|
\frac{\int_r^\infty  \left(1 - \frac{ f(s) }{ g(s) } \right) g(s) F(ds)} {\int_r^\infty g(s)
F(ds)} \right| \leq \sup_{r \leq s} \left| 1 - \frac{ f(s) }{ g(s) } \right|
\end{align*}
\end{proof}
\end{lemma}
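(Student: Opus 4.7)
The plan is to reduce the claim to the definition of $f \sim g$ by writing the ratio of the two integrals in a form where the pointwise ratio $f(s)/g(s)$ appears inside a weighted average. Specifically, since $f$ and $g$ are strictly positive, we may write
\[
 \frac{\int_r^\infty f(s)\, F(ds)}{\int_r^\infty g(s)\, F(ds)} - 1
 \ = \ \frac{\int_r^\infty \bigl(f(s)/g(s) - 1\bigr)\, g(s)\, F(ds)}{\int_r^\infty g(s)\, F(ds)},
\]
provided the denominator is positive. This identity is just algebra, but it is the key step because it turns the problem of comparing two integrals into the problem of controlling the weighted average of $f/g - 1$ against the probability measure $g(s)\, F(ds)/\int_r^\infty g\, F(ds)$ on $(r,\infty)$.

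Next, I would take absolute values inside the integral on the right and factor out the supremum: the resulting bound is $\sup_{s \ge r}|f(s)/g(s) - 1|$, with the weight integrating to $1$. By the hypothesis $f(r) \sim g(r)$ we have $f(s)/g(s) \to 1$ as $s \to \infty$, so this supremum tends to $0$ as $r \to \infty$, which is exactly what is needed.

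The only subtlety is the sanity check that the denominator is nonzero for all large enough $r$, which may fail if $F$ has bounded support; in that case both sides of the claimed equivalence vanish for large $r$ and the statement is to be read as vacuous, so this case poses no real obstacle. No use of the boundedness of $f$ or $g$ is actually required beyond ensuring that the integrals are finite, and regular variation plays no role here — the lemma is a soft pointwise-to-integral transfer. There is no significant obstacle: the whole argument is essentially a one-line computation once the ratio identity above is noted.
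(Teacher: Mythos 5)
Your proof is correct and is essentially identical to the paper's: the same ratio identity expressing $\int_r^\infty f\,dF / \int_r^\infty g\,dF - 1$ as a weighted average of $f/g - 1$, followed by the same bound $\sup_{s \ge r}\lvert 1 - f(s)/g(s)\rvert \to 0$. Your extra remark about the denominator possibly vanishing for distributions with bounded support is a reasonable sanity check that the paper omits, but it does not change the argument.
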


\section{Intersections of distant balls}
\label{sec:Intersections}

\begin{lemma}
\label{lemma_average_intersection}
Let $F$ be a probability distribution on $\Rp$ which follows a power law with tail exponent
$\alpha > d$. Then for any $r_1, r_2 \geq 0$,
\begin{equation}
\label{eq:ai2}
 \int_\Rp | B_{r_1 + r}(o) \cap B_{r_2 + r}(z) | \, F(dr)
 \sim c_{\alpha,d} \, \bar F(|z|) |z|^d
 \quad \text{as $|z| \to \infty$},
\end{equation}
where
\begin{equation}
 \label{eq:calphad}
 c_{\alpha,d} = \int_0^\infty | B_r(o) \cap B_r(e_1) | \alpha r^{-\alpha-1} dr,
\end{equation}
and $e_1$ is the first unit vector in the standard basis of $\R^d$.
\end{lemma}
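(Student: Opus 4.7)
The plan is to rescale by $|z|$ and exploit vague convergence of the regularly varying tail measures. Setting $e_z = z/|z|$ and $a_i = r_i/|z|$, the scaling and isotropy of Lebesgue measure give
$$|B_{r_1+r}(o) \cap B_{r_2+r}(z)| = |z|^d \bigl|B_{a_1+r/|z|}(o) \cap B_{a_2+r/|z|}(e_1)\bigr|.$$
Substituting $u = r/|z|$ and dividing by $\bar F(|z|) |z|^d$, the claim \eqref{eq:ai2} reduces to showing
$$\int_0^\infty g_{|z|}(u)\, \mu_{|z|}(du) \longrightarrow c_{\alpha,d},$$
where $g_{|z|}(u) = |B_{a_1+u}(o) \cap B_{a_2+u}(e_1)|$ and $\mu_{|z|}(du) = F(|z|\,du)/\bar F(|z|)$. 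The proposed limit can be rewritten as $c_{\alpha,d} = \int_0^\infty g_\infty(u)\, \nu_\alpha(du)$ with $g_\infty(u) = |B_u(o) \cap B_u(e_1)|$ and $\nu_\alpha(du) = \alpha u^{-\alpha-1} du$.

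First I would establish convergence on compact sub-intervals. Since $a_i \to 0$, $g_{|z|} \to g_\infty$ uniformly on compacta, and $g_{|z|}$ vanishes for $u < (1-a_1-a_2)/2$. By the definition of regular variation,
$$\mu_{|z|}((u, \infty)) = \frac{\bar F(|z|u)}{\bar F(|z|)} \to u^{-\alpha} = \nu_\alpha((u,\infty))$$
for each $u > 0$, so $\mu_{|z|}$ converges vaguely on $(0, \infty)$ to $\nu_\alpha$. For any $0 < \delta < M < \infty$, the integrals of the continuous, uniformly bounded functions $g_{|z|}$ against $\mu_{|z|}$ over $[\delta, M]$ therefore converge to the corresponding integral of $g_\infty$ against $\nu_\alpha$. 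The contribution from $(0, \delta)$ is zero for large $|z|$ by the support of $g_{|z|}$, so no work is needed at the lower end.

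The hard part will be controlling the tail $u \to \infty$ uniformly in $|z|$, since $g_\infty(u) \sim |B_1|u^d$ grows while $\nu_\alpha$ only decays like $u^{-\alpha-1}$. The crude bound $g_{|z|}(u) \le 2^d |B_1| u^d$ (valid for $u \ge 1$ and $|z|$ large enough so that $a_i \le 1$), combined with the inverse change of variables $v = |z|u$ and Lemma~\ref{lemma_reg_var}, yields
$$\int_M^\infty u^d\, \mu_{|z|}(du) = \frac{1}{|z|^d \bar F(|z|)} \int_{M|z|}^\infty v^d\, F(dv) \;\sim\; \frac{\alpha}{\alpha-d}\, M^{-(\alpha-d)}.$$
Since $\alpha > d$, this tail bound tends to $0$ as $M \to \infty$, and an analogous estimate applies to $\int_M^\infty g_\infty\, d\nu_\alpha$ (which also establishes finiteness of $c_{\alpha,d}$). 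Letting first $|z| \to \infty$ with $M$ fixed and then $M \to \infty$ combines the two steps and completes the argument.
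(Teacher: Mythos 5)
Your proposal is correct and takes a genuinely different route from the paper. The paper's proof first strips out the offsets $r_1,r_2$, writing $I(u)=I_1(u)+I_2(u)$ with $I_1(u)=\int|B_r(o)\cap B_r(ue_1)|\,F(dr)$; it then invokes an external result (Lemma~2 of Kaj, Leskel\"a, Norros, Schmidt 2007) to identify the asymptotics of $I_1$, and separately shows that the remainder $I_2$ is negligible by a geometric annulus estimate on $\psi_u(r)$. You instead keep the full integrand, rescale everything by $|z|$, and split the \emph{domain} rather than the integrand: vague convergence $\mu_{|z|}\to\nu_\alpha$ handles the compact middle part $[\delta,M]$ (together with uniform convergence $g_{|z|}\to g_\infty$ on compacta, which absorbs the shrinking offsets $a_i=r_i/|z|\to0$), the lower tail $(0,\delta)$ is trivial since $g_{|z|}$ vanishes near $0$, and the upper tail $(M,\infty)$ is killed uniformly in $|z|$ by the crude bound $g_{|z|}(u)\le 2^d|B_1|u^d$ combined with Lemma~\ref{lemma_reg_var}. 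The payoff of your route is that it is self-contained: it replaces the citation to the external lemma with a direct Portmanteau/truncation argument, and it handles the perturbation by $r_1,r_2$ and the core scaling limit in a single pass rather than in two separate steps. The paper's decomposition is slightly shorter \emph{given} the external lemma, but proves less explicitly why the limit measure is $\alpha u^{-\alpha-1}\,du$. One small point worth stating explicitly in a final write-up is the justification that $\nu_\alpha(\{\delta\})=\nu_\alpha(\{M\})=0$ (so boundary atoms cause no trouble in the vague-convergence step), and the observation that $\int (r_1+r)^d F(dr)<\infty$ (automatic from $\alpha>d$), which makes all intermediate integrals finite.
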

\begin{proof}
Because the Lebesgue measure is rotation-invariant, we may assume without loss of generality that
$z = ue_1$ for $u > 0$. Fix $r_1,r_2 \ge 0$, and denote left side of \eqref{eq:ai2} by $I(u)$. We
will prove the claim by first showing that
\begin{equation}
 \label{eq:I1Asymptotic}
 I_1(u) = \int_\Rp | B_{r}(o) \cap B_{r}(ue_1) | \, F(dr) \sim c u^d \bar F(u),
\end{equation}
and then showing that the remainder $I_2(u) = I(u) - I_1(u)$ tends to zero faster than $u^d \bar
F(u)$ as $u \to \infty$.

To prove \eqref{eq:I1Asymptotic}, let $F_u$ be the distribution of a random variable obtained by
dividing a $F$-distributed random variable by $u$, so that $F_u(r) = F(ur)$. Then a change of
variables shows that
\[
 I_1(u) = u^d \int_\Rp | B_{r/u}(o) \cap B_{r/u}(e_1) | \, F(dr)
 = u^d \int_\Rp \phi(r) \, F_u(dr),
\]
where $\phi(r) = |B_r(0) \cap B_r(e_1)|$. Because $\phi$ is continuous, $r^{-d} \phi(r) \le |B_1|$
for all $r>0$, and $\phi(r) = 0$ for $r \le 1/2$, we may apply \cite[Lemma
2]{Kaj_Leskela_Norros_Schmidt_2007} (with $p=d$, $\gamma = \alpha$, $q=\alpha+1$) to obtain
\[
 \int_\Rp \phi(r) \, F_u(dr)
 \sim \bar F(u) \int_0^\infty \phi(r) \, \alpha r^{-\alpha-1} dr
 = c \bar F(u),
\]
which implies the validity of \eqref{eq:I1Asymptotic}.

To show that $I_2(u)$ tends to zero faster than $u^d \bar F(u)$, note first that for all $u >
2(r_1 + r_2)$,
\begin{equation}
 \label{eq:I2Integral}
 I_2(u)
 = \int_{u/4}^{\infty} r^d \psi_u(r) \, F(dr)
 \le \left( \sup_{r > u/4} \psi_u(r) \right) \int_{u/4}^{\infty} r^d \, F(dr),
\end{equation}
where
\[
 \psi_u(r)
 = |B_{r_1/r+1}(o) \cap B_{r_2/r+1}(e_1 u/r)| - |B_1(o) \cap B_1(e_1 u/r)|.
\]
The equality in \eqref{eq:I2Integral} follows because $\psi_u(r) = 0$ when $u > 2(r_1 + r_2)$ and
$r < u/4$.

Note that by Lemma~\ref{lemma_reg_var}, the integral on the right side of~\eqref{eq:I2Integral} is
asymptotically equivalent to
\[
 \int_{u/4}^{\infty} r^d \, F(dr)
 \sim \left( \frac{\alpha}{\alpha - d} \right) 4^{\alpha-d} \bar F(u) u^d.
\]
In light of~\eqref{eq:I2Integral}, it hence suffices to show that
\begin{equation}
 \label{eq:PsiLimit}
 \sup_{r > u/4} \psi_u(r) \to 0
\end{equation}
as $u \to \infty$. This will be done by inspecting the geometry of $\psi_u$. Because the
intersection of the unit-balls above is a subset of the intersection of the larger balls, we can
bound the nonnegative function $\psi_u$ using the annuli around the unit-balls, so that
\begin{align*}
 \psi_u(r)
 &=   \left| \big( B_{r_1/r+1}(o) \cap B_{r_2/r+1}(e_1 u/r) \big) \setminus \big( B_1(o) \cap B_1(e_1 u/r) \big) \right| \\
 &\le \left| B_{r_1/r+1}(o) \setminus B_1(o) \right| + \left| B_{r_2/r+1}(e_1 u/r) \setminus B_1(e_1 u/r) \right| \\
 &= |B_1| ((r_1/r+1)^d - 1) + |B_1| ((r_2/r+1)^d - 1) \\
 &\le 2 |B_1| \left( \left( \frac{r_1 \vee r_2}{r} + 1 \right)^d - 1 \right).
\end{align*}
Because this bound is valid for all $u$, we conclude~\eqref{eq:PsiLimit}, and the proof is
complete.
\end{proof}

\begin{lemma}
\label{lemma_int}
Let $F$ be a probability distribution on $\Rp$ which follows a power law with tail exponent
$\alpha > d$. Then there exist constants $u > 0$ and $c > 0$ such that
\begin{equation}
 \label{eq:lemma_int}
 \int_\Rp |B_{r_1+r}(o) \cap B_{r_2+r}(z)| \, F(dr) \le c \, \bar F(|z|) |z|^d
\end{equation}
whenever $|z| > u$ and $r_1+r_2 \le |z|/2$.
\end{lemma}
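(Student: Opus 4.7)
The plan is to reduce the bound to a simple estimate of a power-law truncated moment, exploiting that under the constraint $r_1 + r_2 \le |z|/2$ the support of the integrand is confined to large radii.

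First I would observe that $B_{r_1+r}(o) \cap B_{r_2+r}(z)$ is empty whenever $(r_1+r)+(r_2+r) < |z|$, so the integrand vanishes unless
\[
 r \ge \frac{|z| - r_1 - r_2}{2} \ge \frac{|z|}{4},
\]
where the second inequality uses the hypothesis $r_1+r_2 \le |z|/2$. This step is crucial since it forces the integration effectively over the tail $[|z|/4,\infty)$ of $F$.

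Second, on that range I would bound the intersection volume by the volume of the smaller ball. Since $r \ge |z|/4$ implies $r_1 \vee r_2 \le r_1 + r_2 \le |z|/2 \le 2r$, we have $r_i + r \le 3r$, hence
\[
 |B_{r_1+r}(o) \cap B_{r_2+r}(z)| \le |B_1| (3r)^d = |B_1| 3^d r^d
\]
uniformly in $r_1,r_2$ satisfying the hypothesis. Therefore
\[
 \int_{\R_+} |B_{r_1+r}(o) \cap B_{r_2+r}(z)| \, F(dr)
 \le |B_1| 3^d \int_{|z|/4}^\infty r^d \, F(dr).
\]

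Finally, Lemma~\ref{lemma_reg_var} (with $p=d$ and $a=1/4$) gives
\[
 \int_{|z|/4}^\infty r^d \, F(dr) \sim \frac{\alpha}{\alpha-d}\, 4^{\alpha-d}\, \bar F(|z|)\, |z|^d
 \quad\text{as } |z| \to \infty,
\]
so for all $|z|$ exceeding some threshold $u$ the right side is at most twice its asymptotic value. Choosing $c = 2|B_1|\, 3^d \, \frac{\alpha}{\alpha-d}\, 4^{\alpha-d}$ yields the claim. There is no real obstacle here: the only point requiring care is checking that the emptiness threshold $(|z|-r_1-r_2)/2$ dominates $|z|/4$, which is precisely what the hypothesis $r_1+r_2 \le |z|/2$ provides, making the bound uniform in $(r_1,r_2)$.
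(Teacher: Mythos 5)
Your proof is correct and follows essentially the same route as the paper's: both use the support restriction $r \ge |z|/4$ forced by the emptiness condition and the hypothesis $r_1+r_2 \le |z|/2$, bound the intersection volume by $3^d|B_1|r^d$, and then apply Lemma~\ref{lemma_reg_var} to the tail integral $\int_{|z|/4}^\infty r^d\,F(dr)$, arriving at the same constant.
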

\begin{proof}
Observe first using Lemma~\ref{lemma_reg_var} that
\[
 \int_{|z|/4}^{\infty} r^d \, F(dr) \sim c_1 |z|^d \bar F(|z|),
\]
where $c_1 = 4^{\alpha-d} \alpha/(\alpha-d)$. Hence, we may fix a constant $u>0$ such that
\begin{equation}
 \label{eq:lemma_int2}
 \int_{|z|/4}^{\infty} r^d \, F(dr) \le 2 c_1 |z|^d \bar F(|z|)
\end{equation}
whenever $|z| > u$.

Assume now that $|z| > u$ and $r_1+r_2 \le |z|/2$. In this case the intersection on the left side of \eqref{eq:lemma_int}
is nonempty only when $r > |z|/4$. For any such $r > |z|/4$, a crude estimate shows that
\[
 |B_{r_1+r}(o) \cap B_{r_2+r}(z)|
 \le |B_{r_1+r}(o)|
 \le |B_1| ( r_1 + r_2 + r)^d,
\]
which together with the inequality $r_1 + r_2 \le |z|/2 < 2r$
shows that
\[
 |B_{r_1+r}(o) \cap B_{r_2+r}(z)|
 \le 3^d |B_1| r^d.
\]
As a consequence,
\[
 \int_\Rp |B_{r_1+r}(o) \cap B_{r_2+r}(z)| \, F(dr)
 \le 3^d |B_1| \int_{|z|/4}^{\infty} r^d F(dr),
\]
so that by virtue of~\eqref{eq:lemma_int2}, the claim holds for $c = 2 c_1 3^d |B_1|$.
\end{proof}

\begin{lemma}
\label{lemma_s1}
Let $F$ be a probability distribution on $\Rp$ which follows a power law with tail exponent
$\alpha > d$, and let $h$ be an arbitrary positive function. Then there exist constants $u > 0$
and $c > 0$ such that
\[
 (|z|^d \bar F(|z|))^{-1} \int_\Rp |B_r(o) \cap B_r(z)| h(r) F(dr)
 \le c \sup_{r \ge |z|/2} h(r)
\]
for $|z| > u$.
\begin{proof}
By using Lemma~\ref{lemma_int}, fixing the constants $u$ and $c$ as in the lemma, and noting that
the integrand vanishes for $r \le |z|/2$, we see that
\[
 \int_\Rp |B_r(o) \cap B_r(z)| h(r) F(dr)
 \le h^*(z) \int_{\R+} |B_r(o) \cap B_r(z)| F(dr)
 \le c h^*(z) \bar F(|z|) |z|^d
\]
for all $|z| > u$, where $h^*(z) = \sup_{r \geq |z|/2} h(r)$.
\end{proof}
\end{lemma}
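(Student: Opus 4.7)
The plan is to deduce this lemma as an almost immediate corollary of Lemma~\ref{lemma_int}. The key geometric observation is that two balls of radius $r$ centered at $o$ and $z$ are disjoint whenever $r < |z|/2$, so $|B_r(o) \cap B_r(z)| = 0$ on that range. This means the integrand in question is supported on $\{r \ge |z|/2\}$, and on that set $h(r)$ can be pulled out using its supremum $h^*(z) = \sup_{r \ge |z|/2} h(r)$.

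Concretely, I would first write
\[
 \int_\Rp |B_r(o) \cap B_r(z)| h(r) F(dr)
 = \int_{[|z|/2,\infty)} |B_r(o) \cap B_r(z)| h(r) F(dr)
 \le h^*(z) \int_{\R_+} |B_r(o) \cap B_r(z)| F(dr),
\]
where extending the integration back to all of $\R_+$ in the last step is harmless because the integrand still vanishes for $r < |z|/2$. Then I would apply Lemma~\ref{lemma_int} in the special case $r_1 = r_2 = 0$: the hypothesis $r_1 + r_2 \le |z|/2$ is trivially satisfied, and the lemma supplies constants $u > 0$ and $c > 0$ such that $\int |B_r(o) \cap B_r(z)| F(dr) \le c \bar F(|z|) |z|^d$ whenever $|z| > u$. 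Dividing through by $|z|^d \bar F(|z|)$ gives the stated inequality with the same constants $u$ and $c$.

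There is essentially no obstacle here; the content of the lemma is packaged inside Lemma~\ref{lemma_int}, and the only work is to observe the disjointness of the balls for small $r$ so that $h$ can be controlled by its tail supremum. The only point requiring any care is making sure the constants $u$ and $c$ furnished by Lemma~\ref{lemma_int} do not depend on $h$, which is clear from the statement of that lemma since the bound there is entirely in terms of $F$ and the dimension.
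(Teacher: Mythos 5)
Your proof is correct and follows exactly the paper's own argument: note that the integrand vanishes for $r < |z|/2$, pull out $\sup_{r \ge |z|/2} h(r)$, and apply Lemma~\ref{lemma_int} (with $r_1 = r_2 = 0$) to bound the remaining integral by $c\,\bar F(|z|)\,|z|^d$ for $|z| > u$. Nothing to add.
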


{\small
\subsection*{Acknowledgements}
We thank Volker Schmidt for suggesting to study this type of problems a long time ago back in
2005. We thank Antti Penttinen for inspiring discussions and valuable comments.}

\bibliographystyle{alpha}
\bibliography{lahteet-v1.3}

\begin{thebibliography}{MVDE02}

\bibitem[AHM06]{Andersson_Haggstrom_Mansson_2006}
Jenny Andersson, Olle H{\"a}ggstr{\"o}m, and Marianne M{\aa}nsson.
\newblock The volume fraction of a non-overlapping germ-grain model.
\newblock {\em Electron. Comm. Probab.}, 11:78--88, 2006.

\bibitem[BB09]{Baccelli_Blaszczyszyn_2009b}
F.~Baccelli and B.~B{\l}aszczyszyn.
\newblock {\em Stochastic Geometry and Wireless Networks, Volume II ---
  Applications}.
\newblock NoW, 2009.
\newblock http://hal.inria.fr/inria-00403040.

\bibitem[BGT87]{Bingham_Goldie_Teugels_1987}
Nicholas~H. Bingham, Charles~M. Goldie, and Jozef~L. Teugels.
\newblock {\em Regular Variation}.
\newblock Cambridge University Press, 1987.

\bibitem[BS03]{Bohm_Schmidt_2003}
Stephan B{\"o}hm and Volker Schmidt.
\newblock Palm representation and approximation of the covariance of random
  closed sets.
\newblock {\em Adv. Appl. Probab.}, 35(2):295--302, 2003.

\bibitem[CSN09]{Clauset_Shalizi_Newman_2009}
Aaron Clauset, Cosma~Rohilla Shalizi, and M.~E.~J. Newman.
\newblock Power-law distributions in empirical data.
\newblock {\em SIAM Rev.}, 51(4):661--703, 2009.

\bibitem[Dal99]{Daley_1999}
D.~J. Daley.
\newblock The {H}urst index of long-range dependent renewal processes.
\newblock {\em Ann. Probab.}, 27(4):2035--2041, 1999.

\bibitem[DEKS11]{Demichel_Estrade_Kratz_Samorodnitsky_2011}
Yann Demichel, Anne Estrade, Marie Kratz, and Gennady Samorodnitsky.
\newblock How fast can the chord length distribution decay?
\newblock {\em Adv. Appl. Probab.}, 43(2):504--523, 2011.

\bibitem[DV97]{Daley_Vesilo_1997}
D.~J. Daley and Rein Vesilo.
\newblock Long range dependence of point processes, with queueing examples.
\newblock {\em Stoch. Proc. Appl.}, 70(2):265--282, 1997.

\bibitem[DVJ08]{Daley_Vere-Jones_2008}
D.~J. Daley and D.~Vere-Jones.
\newblock {\em An Introduction to the Theory of Point Processes. {V}ol. {II}}.
\newblock Springer, New York, second edition, 2008.

\bibitem[Hae11]{Haenggi_2011}
Martin Haenggi.
\newblock Mean interference in hard-core wireless networks.
\newblock {\em IEEE Commun. Lett.}, 15(8):792--794, 2011.

\bibitem[JMST05]{Jones_Martinez_Saar_Trimble_2005}
Bernard J.~T. Jones, Vicent~J. Mart\'{\i}nez, Enn Saar, and Virginia Trimble.
\newblock Scaling laws in the distribution of galaxies.
\newblock {\em Rev. Mod. Phys.}, 76:1211--1266, 2005.

\bibitem[KLNS07]{Kaj_Leskela_Norros_Schmidt_2007}
Ingemar Kaj, Lasse Leskel{\"a}, Ilkka Norros, and Volker Schmidt.
\newblock Scaling limits for random fields with long-range dependence.
\newblock {\em Ann. Probab.}, 35(2):528--550, 2007.

\bibitem[KS01]{Kulik_Szekli_2001}
Rafa{\l} Kulik and Ryszard Szekli.
\newblock Sufficient conditions for long-range count dependence of stationary
  point processes on the real line.
\newblock {\em J. Appl. Probab.}, 38(2):570--581, 2001.

\bibitem[Mat60]{Matern_1960}
Bertil Mat{\'e}rn.
\newblock {\em Spatial variation: {S}tochastic models and their application to
  some problems in forest surveys and other sampling investigations}.
\newblock Meddelanden Fran Statens Skogsforskningsinstitut, Stockholm, 1960.

\bibitem[Mol05]{Molchanov_2005}
Ilya Molchanov.
\newblock {\em Theory of Random Sets}.
\newblock Springer-Verlag, London, 2005.

\bibitem[MR02]{Mansson_Rudemo_2002}
Marianne M{\aa}nsson and Mats Rudemo.
\newblock Random patterns of nonoverlapping convex grains.
\newblock {\em Adv. Appl. Probab.}, 34(4):718--738, 2002.

\bibitem[MVDE02]{Snethlage_Martinez_Stoyan_Saar_2002}
{M. Snethlage}, {V. J. Mart\'{\i}nez}, {D. Stoyan}, and {E. Saar}.
\newblock Point field models for the galaxy point pattern.
\newblock {\em Astron. Astrophys.}, 388(3):758--765, 2002.

\bibitem[NB12]{Nguyen_Baccelli}
Tien~Viet Nguyen and Fran{\c{c}}ois Baccelli.
\newblock Generating functionals of random packing point processes: {F}rom
  hard-core to carrier sensing.
\newblock \url{http://arXiv.org/abs/1202.0225}, 2012.

\bibitem[OM00]{Ohser_Mucklich_2000}
Joachim Ohser and Frank M{\"u}cklich.
\newblock {\em Statistical Analysis of Microstructures in Materials Science}.
\newblock Wiley, 2000.

\bibitem[Sam06]{Samorodnitsky_2006}
Gennady Samorodnitsky.
\newblock Long range dependence.
\newblock {\em Found. Trends Stoch. Syst.}, 1(3):163--257, 2006.

\bibitem[SKM95]{Stoyan_Kendall_Mecke_1995}
Dietrich Stoyan, Wilfrid~S. Kendall, and Joseph Mecke.
\newblock {\em Stochastic Geometry and its Applications}.
\newblock Wiley, second edition, 1995.

\bibitem[SSW02]{Schuth_Sing_Weitkamp_2002}
Ferdi Schüth, Kenneth S.~W. Sing, and Jens Weitkamp, editors.
\newblock {\em Handbook of Porous Solids}.
\newblock Wiley-VCH, 2002.

\bibitem[SW08]{Schneider_Weil_2008}
Rolf Schneider and Wolfgang Weil.
\newblock {\em Stochastic and Integral Geometry}.
\newblock Probability and its Applications (New York). Springer-Verlag, Berlin,
  2008.

\bibitem[VA98]{Vamvakos_Anantharam_1998}
Socrates Vamvakos and Venkat Anantharam.
\newblock On the departure process of a leaky bucket system with long-range
  dependent input traffic.
\newblock {\em Queueing Syst.}, 28(1--3):191--214, 1998.

\end{thebibliography}
%\bibliography{lslReferences}
\end{document}